\documentclass[12pt,reqno]{amsart}
\usepackage[utf8]{inputenc}
\usepackage[T1]{fontenc}
\usepackage{amsmath,amssymb,amsfonts}
\usepackage{mathtools}
\mathtoolsset{showonlyrefs}
\usepackage[expansion=false]{microtype}
\usepackage[hmargin=1.15in,vmargin=1in]{geometry}
\usepackage{xcolor}
\usepackage{tikz}     
\usepackage{lineno}   

\definecolor{lava}{rgb}{0.81, 0.06, 0.13}      
\definecolor{cobalt}{rgb}{0.0, 0.28, 0.67}     
\definecolor{tealgreen}{rgb}{0.0, 0.51, 0.5}   
\definecolor{lime}{HTML}{A6CE39}               

\usepackage[colorlinks,linkcolor=lava,citecolor=cobalt,urlcolor=tealgreen,hypertexnames=false]{hyperref}
\hypersetup{unicode=true}

\numberwithin{equation}{section}
\newcounter{bbb}
\numberwithin{bbb}{section}

\DeclareMathOperator{\Div}{div}

\newcommand{\N}{\mathbb N}

\newcommand{\R}{\mathbb R}

\newtheorem{thm}[bbb]{Theorem}
\newtheorem{defi}[bbb]{Definition}
\newtheorem{lem}[bbb]{Lemma}

\newtheorem{rem}[bbb]{Remark}
\newtheorem{exam}[bbb]{Example}

\DeclareRobustCommand{\orcidicon}{%
    \begin{tikzpicture}
    \draw[lime, fill=lime] (0,0)
        circle [radius=0.16]
        node[white] {{\fontfamily{qag}\selectfont \tiny ID}};
    \draw[white, fill=white] (-0.0625,0.095)
        circle [radius=0.007];
    \end{tikzpicture}%
    \hspace{-2mm}%
}
\foreach \x in {A, B, C}{%
    \expandafter\xdef\csname orcid\x\endcsname{%
        \noexpand\href{https://orcid.org/\csname orcidauthor\x\endcsname}%
        {\noexpand\orcidicon}}%
}

\title[Penalized Navier--Stokes system with biharmonic damping]{Global existence and optimal decay for a three-dimensional penalized Navier--Stokes system with biharmonic damping}
\author[K.-M. Adeyemo, M. Majdoub and S. Pal]{Kabiru Michael Adeyemo, Mohamed Majdoub \& Subha Pal}
\address[M. Adeyemo]{Department of Mathematics, Hallmark University, Ijebu-Itele,
 Ogun State, Nigeria}
\email{mikyade2019@gmail.com}
\address[M. Majdoub]{Department of Mathematics, College of Science, Imam Abdulrahman Bin Faisal University, P. O. Box 1982, Dammam, Saudi Arabia.}
\address[M. Majdoub]{Basic and Applied Scientific Research Center, Imam Abdulrahman Bin Faisal University, P.O. Box 1982, 31441, Dammam, Saudi Arabia.}
\email{mmajdoub@iau.edu.sa}
\email{mohamed.majdoub@fst.rnu.tn}
\email{med.majdoub@gmail.com}
\address[S. Pal]{Department of Mathematical Sciences, Tezpur University, Sonitpur, Assam, 784028, India}
\email{sp234sp@gmail.com}

\begin{document}

\begin{abstract}
We investigate a three-dimensional parabolic system that arises as a
hyperviscous and penalized approximation of the incompressible Navier--Stokes
equations. The model combines three complementary dissipative mechanisms: the
classical viscous diffusion, a biharmonic (hyperviscous) regularization, and a
divergence penalization. In addition, a Temam-type correction is incorporated
into the nonlinear convection term to compensate for the weak compressibility
effects generated by the penalization procedure.

We prove the global existence of weak solutions for arbitrary initial data
belonging to \(L^2(\mathbb{R}^3)\). For sufficiently small initial data in
\(H^2(\mathbb{R}^3)\), we establish the existence and uniqueness of global
strong solutions. Furthermore, for initial data in
\(L^1(\mathbb{R}^3)\cap H^2(\mathbb{R}^3)\), we derive optimal large-time decay
estimates, showing that the solutions exhibit the same asymptotic decay rates
as those of the classical heat equation. A key feature of our analysis is that
all the obtained a priori estimates are uniform with respect to the positive
penalization parameter $\varepsilon$. These uniform bounds provide a stable
and rigorous analytical foundation for the study of the penalized approximation
of incompressible flows.
\end{abstract}

\subjclass[2020]{Primary 35Q35; Secondary 35B40, 35D30, 35D35, 35K46, 76D05}
\keywords{Navier–Stokes equations; biharmonic damping; penalty method; skew-symmetric nonlinearity; global weak solutions; global strong solutions; optimal decay; Fourier splitting}

\date{\today}

\maketitle

\section{Introduction and main results}
\label{Intro}

\subsection{Background and motivation}

The mathematical description of viscous fluid motion is classically
provided by the incompressible Navier--Stokes equations
\begin{equation}
  \label{NSE}
\begin{cases}
\partial_t u - \nu \Delta u + (u \cdot \nabla)u + \nabla p = 0, & \text{in } \R_+ \times \R^3, \\
\Div u = 0, & \text{in } \R_+\times \R^3, \\
u(0, x) = u_0(x), & \text{in } \R^3,
\end{cases}
\end{equation}
where $u(t,x)\in\R^3$ stands for the velocity field, $p(t,x)\in\R$
denotes the pressure, and $\nu>0$ is the kinematic viscosity. The
fundamental works of Leray~\cite{Leray} and Hopf~\cite{MN1951}
established the global existence of weak (Leray--Hopf) solutions
to~\eqref{NSE} starting from arbitrary finite-energy initial data.
However, the question of whether such solutions are unique and remain
smooth for all times is, in three dimensions, still one of the most
celebrated open problems in mathematical fluid mechanics. Substantial
progress has been made on partial regularity, conditional regularity
criteria, and uniqueness in stronger function spaces: we mention the
partial regularity theory of Caffarelli, Kohn and
Nirenberg~\cite{CKN1982} and Struwe~\cite{Struwe1988}, the tamed
Navier--Stokes equations of R\"ockner and Zhang~\cite{RZ2009}, and we
refer the reader to the monographs of Temam~\cite{Temam} and
Robinson--Rodrigo--Sadowski~\cite{3D-NS-Book} for a comprehensive
overview of these developments.

In view of these difficulties, considerable attention has been devoted
in the past decades to \emph{regularizations} or \emph{modifications}
of~\eqref{NSE} obtained by introducing additional dissipative or
damping mechanisms. The general philosophy is that an extra term in the
momentum equation may compensate for the lack of control on the
nonlinear convective term and thereby yield improved well-posedness
results. A widely studied family of such modifications is the
Navier--Stokes equations with absorption term
$f(u)=\alpha|u|^{r-1}u$, namely
\begin{equation}
  \label{NSE-damping}
  \partial_t u - \nu\Delta u + (u\cdot\nabla)u + \alpha|u|^{r-1}u + \nabla p = 0,
  \qquad \Div u=0,
\end{equation}
with $\alpha>0$ and $r\ge 1$. This system models, for instance, the
flow of fluids through porous media or the resistance produced by a
distribution of obstacles; see Antontsev and de
Oliveira~\cite{Ant2010} for a detailed analysis of the modified
problem in bounded domains. For the Cauchy problem in~$\R^3$, Cai and
Jiu~\cite{JMAA2008} showed that~\eqref{NSE-damping} admits a global
weak solution for every $r\ge 1$ and a global strong solution for
$r\ge 7/2$, while uniqueness of strong solutions in the range
$3<r\le 5$ was established by Zhang--Wu--Lu~\cite{Zhang}; the borderline
exponent $r=3$ was subsequently reached by Zhou~\cite{Zhou2012}. The
endpoint case $r=3$ has been investigated extensively in the
literature, where both viscosity and damping coefficient are required
to be sufficiently large; see Hajduk and Robinson~\cite{JDE2017} for a
treatment of the critical convective Brinkman--Forchheimer equations
on the three-dimensional torus. Long-time dynamics, including the
existence of global attractors and smooth absorbing sets for damping
with large powers, have been analyzed via the Brinkman--Forchheimer
formalism by Kalantarov and Zelik~\cite{CPAA2012}. Local and global
strong solutions for the 3D Navier--Stokes system with damping have
recently been revisited in~\cite{JEE24}, while the global regularity
of the 3D generalized Navier--Stokes equations with damping was
addressed by Xu and Zhou~\cite{Xu}. The well-posedness and $L^2$-decay
properties for Navier--Stokes systems with both fractional dissipation
and damping were investigated by Sun, Xue and Liu~\cite{Su24}, and the
large-time behaviour of solutions to the 3D Navier--Stokes equations
with damping was studied by Yang and Zhang~\cite{ZAMP2020} and, more
recently, by Zhou and Zhou~\cite{Zhou2026}. In bounded domains, the
existence and uniqueness of solutions to the damped Navier--Stokes
equations with Navier boundary conditions in three dimensions was
established by Pal and Haloi~\cite{PH2021}. From a different
perspective, the singular and regular structure of solutions to a
related nonlinear parabolic system was analyzed by Plech\'a\v{c} and
\v{S}ver\'ak~\cite{Nonl2003}.

In contrast to these zero-th order absorption terms, the present work
focuses on a \emph{higher-order} damping mechanism, namely a
biharmonic dissipation of the form $\beta\Delta^2 u$. The idea of
regularizing the Navier--Stokes equations by hyperviscosity goes back
at least to Lions~\cite{Lions1969}, who proved that replacing
$-\Delta$ by the fractional dissipation $(-\Delta)^{\gamma}$ with
$\gamma\ge 5/4$ restores global regularity for arbitrary data in
three dimensions; refined results at and near this critical exponent
were later obtained by Katz and Pavlovi\'c~\cite{KP2002} and
Tao~\cite{Tao2009}. Beyond fluid mechanics, fourth-order dissipative
terms appear naturally in many models of mathematical physics and
applied analysis: in the Cahn--Hilliard equation describing phase
separation, in fourth-order parabolic equations modelling epitaxial
thin-film growth~\cite{JMPA2015}, in surface diffusion
flow~\cite{IFB2014}, in image processing and segmentation, and in the
biharmonic heat equation~\cite{V2016}. From a fluid-mechanical point
of view, hyperviscosity has long been used as a numerical and
theoretical regularization, since it provides additional dissipation
concentrated at high frequencies while leaving the large-scale
dynamics essentially unaffected. The mathematical study of the
equation $\partial_t u + \beta\Delta^2 u = -(u\cdot\nabla)u$ and its
variants is well-developed: in particular, it was shown
in~\cite{V2016} that, for radially symmetric and compactly supported
initial data, no singularities can form in space dimensions $N\le 4$,
while numerical evidence suggests possible finite-time blow-up when
$N>4$.

A second classical device, which is central to the present paper, is
the \emph{penalty method} of Temam~\cite{Temam1968}: the
incompressibility constraint $\Div u=0$ and the associated pressure
are removed from the system, and are replaced by the penalization term
$-\frac{1}{\varepsilon}\nabla\Div u$ together with the compressibility
correction $\frac12\,u\,\Div u$ in the convective term. The correction
puts the nonlinearity in the so-called \emph{skew-symmetric} (or
Temam) form, which restores the $L^2$-cancellation property of the
transport term even when the velocity field is not divergence-free.
The penalty method is one of the standard tools in the numerical
analysis of incompressible flows, see for instance Shen~\cite{Shen1995}
and the monograph~\cite{Temam}, precisely because it eliminates the
pressure and frees the test functions from the divergence-free
constraint.

\subsection{The model}

The system we consider in this paper combines the mechanisms described
above in a single model. Specifically, we study the following
three-dimensional parabolic system with biharmonic damping, viscous
dissipation, and a penalization of the divergence:
\begin{equation}
    \label{Main-eq}
    \partial_t u
    - \nu \Delta u
    + \mathcal{N}(u)
    - \frac{1}{\varepsilon}\nabla \Div u
    + \beta \Delta^2 u
    = 0,
    \qquad
    (t,x) \in \R_+ \times \R^3,
\end{equation}
supplemented with the initial condition $u(0,x)=u_0(x)$, where
$u=u(t,x)\in\R^3$ is the velocity field, $\nu>0$ is the kinematic
viscosity, $\varepsilon>0$ is a small penalization parameter,
$\beta>0$ is the hyperviscosity coefficient, and
\begin{equation}
    \label{def:N}
    \mathcal{N}(u)
    := (u\cdot\nabla)u + \frac12\,u\,\Div u
\end{equation}
is the convective term in Temam's skew-symmetric form. The role of
each term in~\eqref{Main-eq} can be summarized as follows:
\begin{itemize}
    \item $\tfrac{1}{2}u \, \Div u$: a nonlinear correction due to compressibility, ensuring the $L^2$-cancellation of the transport term (Lemma~\ref{lem:cancel} below);
    \item $-\tfrac{1}{\varepsilon}\nabla\Div u$: a penalty term that enforces near-incompressibility, in the sense that $\Div u\to 0$ as $\varepsilon\to 0$;
    \item $\beta\Delta^2 u$: a hyperviscous biharmonic term that provides additional dissipation at high frequencies.
\end{itemize}

Formally taking the $L^2$-inner product of~\eqref{Main-eq} with $u$ and
using the cancellation property of $\mathcal{N}$, one obtains the
energy identity
\begin{equation}
    \label{eq:formal-energy}
    \frac12\frac{d}{dt}\|u(t)\|_{L^2}^2
    + \nu\|\nabla u(t)\|_{L^2}^2
    + \beta\|\Delta u(t)\|_{L^2}^2
    + \frac1\varepsilon\|\Div u(t)\|_{L^2}^2
    = 0.
\end{equation}
From this identity we deduce that:
\begin{itemize}
    \item the hyperviscous term $\beta\Delta^2 u$ enhances dissipation, especially of high-frequency components;
    \item the penalization term $-\tfrac{1}{\varepsilon}\nabla\Div u$ acts as a strong constraint forcing the velocity field to be approximately divergence-free: indeed, \eqref{eq:formal-energy} yields
    $\int_0^\infty\|\Div u(t)\|_{L^2}^2\,dt \le \tfrac{\varepsilon}{2}\|u_0\|_{L^2}^2$;
    \item the skew-symmetric form of the nonlinearity allows one to derive an unconditional $L^2$ energy estimate, in contrast to the situation for the classical compressible Navier--Stokes system.
\end{itemize}

System~\eqref{Main-eq} shares a number of structural features with the
classical Navier--Stokes equations~\eqref{NSE}, but enjoys two important
analytical advantages. First, the presence of the biharmonic
term $\beta\Delta^2 u$ provides an additional smoothing mechanism at
the level of two derivatives, which substantially improves the available
a priori estimates compared with the standard $H^1$ control of the
Leray--Hopf theory. Second, the formulation we adopt avoids the explicit
treatment of the pressure term: the divergence is penalized rather than
imposed as a constraint, and the test functions in the weak
formulation are not required to be divergence-free
(see Definition~\ref{def:weak-sol}).
A subtle point is that the basic energy estimate yields only
$\tfrac{1}{\sqrt{\varepsilon}}\Div u\in L^2(0,T;L^2(\R^3))$, so that
the penalization term $\tfrac{1}{\varepsilon}\nabla\Div u$ is merely
bounded in $L^2(0,T;H^{-1}(\R^3))$, with a bound that degenerates as
$\varepsilon\to0$. To handle this term, as well as the biharmonic
term, it is therefore necessary to interpret $\partial_t u$ as an
element of the dual of a suitable test-function space; this is
reflected in the choice $\partial_t u\in L^{4/3}(0,T;H^{-2}(\R^3))$ in
Definition~\ref{def:weak-sol}. We believe that the analysis
of~\eqref{Main-eq}, in particular in the limit $\varepsilon\to 0$
where the model formally degenerates to an incompressible problem, may
shed new light on the regularity theory of the Navier--Stokes
equations themselves.

\subsection{Main results}

Before stating our results, let us make precise the notion of weak
solution used throughout the paper.

\begin{defi}[Global weak solution]
\label{def:weak-sol}
Let $u_0\in L^2(\R^3;\R^3)$, $\nu>0$, $\beta>0$, and
$\varepsilon>0$. A vector field $u$ is called a
\emph{global weak solution} of \eqref{Main-eq} with initial data $u_0$
if the following hold.

\begin{itemize}
    \item[\textnormal{(i)}] For every $T>0$,
    \[
        u \in L^\infty(0,T;L^2(\R^3))
            \cap L^2(0,T;H^2(\R^3))
            \cap L^4(0,T;L^4(\R^3)),
    \]
    with
    \[
        \partial_t u \in L^{4/3}(0,T;H^{-2}(\R^3)),
        \qquad
        \Div u \in L^2(0,T;L^2(\R^3)).
    \]

    \item[\textnormal{(ii)}] For every $T>0$ and every
    $\phi\in C_c^\infty([0,T)\times\R^3;\R^3)$,
    \begin{align}
        \label{eq:weak-form}
        &-\int_0^T\!\!\int_{\R^3} u\cdot\partial_t\phi\,dx\,dt
        + \nu\int_0^T\!\!\int_{\R^3}\nabla u:\nabla\phi\,dx\,dt
        + \beta\int_0^T\!\!\int_{\R^3}\Delta u\cdot\Delta\phi\,dx\,dt
        \notag\\
        &\qquad
        + \frac1\varepsilon\int_0^T\!\!\int_{\R^3}
            (\Div u)(\Div\phi)\,dx\,dt
        + \int_0^T\!\!\int_{\R^3}
            \mathcal{N}(u)\cdot\phi\,dx\,dt
        = \int_{\R^3}u_0(x)\cdot\phi(0,x)\,dx.
    \end{align}

    \item[\textnormal{(iii)}] For almost every $t\ge 0$,
    \begin{equation}
        \label{eq:energy-ineq}
        \|u(t)\|_{L^2}^2
        + 2\nu\int_0^t\|\nabla u(\tau)\|_{L^2}^2\,d\tau
        + 2\beta\int_0^t\|\Delta u(\tau)\|_{L^2}^2\,d\tau
        + \frac{2}{\varepsilon}\int_0^t
            \|\Div u(\tau)\|_{L^2}^2\,d\tau
        \le \|u_0\|_{L^2}^2.
    \end{equation}
\end{itemize}
\end{defi}

\begin{rem}\rm
\label{rem:weak-solution-comments}
\leavevmode
\begin{itemize}
    \item[(a)] The test functions in \eqref{eq:weak-form} are
    unrestricted; they are not assumed divergence-free. This is the
    natural weak formulation for \eqref{Main-eq}, and it is one of the
    main technical simplifications gained by penalizing, rather than
    imposing, the incompressibility constraint.

    \item[(b)] The $L^4(0,T;L^4)$ regularity is not an extra hypothesis;
    it is a consequence of the energy estimate, the Sobolev embedding
    $H^2(\R^3)\hookrightarrow L^\infty(\R^3)$, and interpolation (see
    Step~3 of the proof of Theorem~\ref{thm:wp}).

    \item[(c)] All the integrals in \eqref{eq:weak-form} are finite
    under the regularity assumptions of item (i). In particular, the
    nonlinear term is integrable since
    $\mathcal{N}(u)\in L^{4/3}(0,T;H^{-2}(\R^3))$; see
    Step~4 of the proof of Theorem~\ref{thm:wp}.
\end{itemize}
\end{rem}

Our first main result asserts the global existence of weak solutions
for arbitrary finite-energy data.

\begin{thm}[Global weak existence]
\label{thm:wp}
Let $u_0\in L^2(\R^3;\R^3)$, $\nu>0$, $\beta>0$, and
$\varepsilon>0$. Then there exists a global weak solution
$u$ of \eqref{Main-eq} in the sense of
Definition~\ref{def:weak-sol}. Moreover, for every $T>0$,
\[
u\in L^4(0,T;L^4(\R^3)).
\]
\end{thm}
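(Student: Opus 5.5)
We construct the solution by a Fourier-truncated Galerkin scheme, derive $\varepsilon$-uniform energy bounds, and pass to the limit by compactness. The remark following Definition~\ref{def:weak-sol} refers to Steps~3 and~4, so we organize the argument in that order.

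\textbf{Step 1 (approximation).} For $n\in\N$, let $P_n$ be the Fourier multiplier onto frequencies $|\xi|\le n$. We solve
\[
\partial_t u_n-\nu\Delta u_n+\beta\Delta^2u_n-\tfrac1\varepsilon\nabla\Div u_n+P_n\mathcal N(u_n)=0,\qquad u_n(0)=P_nu_0,
\]
as an ODE in the Hilbert space $P_nL^2$. On this space all derivatives are bounded operators, and $\mathcal N$ is locally Lipschitz because $P_nL^2\subset H^\infty\hookrightarrow W^{1,\infty}$. Cauchy--Lipschitz therefore gives a local solution.

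\textbf{Step 2 (energy estimate).} Test with $u_n$ and use Lemma~\ref{lem:cancel}; this is legitimate because $P_n$ is self-adjoint and $P_nu_n=u_n$. We obtain the energy identity \eqref{eq:formal-energy} for $u_n$ with initial norm $\|P_nu_0\|_{L^2}\le\|u_0\|_{L^2}$. This rules out blow-up of the ODE, so $u_n$ exists globally. It also gives bounds, uniform in $n$, of $u_n$ in $L^\infty(0,T;L^2)\cap L^2(0,T;H^2)$ and of $\varepsilon^{-1/2}\Div u_n$ in $L^2(0,T;L^2)$; here $\|u\|_{H^2}\lesssim\|u\|_{L^2}+\|\Delta u\|_{L^2}$.

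\textbf{Step 3 ($L^4L^4$ bound).} By Gagliardo--Nirenberg in $\R^3$,
\[
\|u\|_{L^4}\lesssim\|u\|_{L^2}^{5/8}\|\Delta u\|_{L^2}^{3/8},
\]
so $\|u\|_{L^4}^4\lesssim\|u\|_{L^2}^{5/2}\|\Delta u\|_{L^2}^{3/2}$. Since $3/2<2$, Hölder in time gives $u_n$ bounded in $L^4(0,T;L^4)$, with a bound of order $\|u_0\|_{L^2}^4\,T^{1/4}\beta^{-3/4}$.

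\textbf{Step 4 (time-derivative bound).} For $\phi\in H^2$ we have $|\langle u\cdot\nabla u,\phi\rangle|\le\|u\|_{L^4}\|\nabla u\|_{L^2}\|\phi\|_{L^4}$, and the term $u\Div u$ is handled the same way. Therefore
\[
\|\mathcal N(u_n)\|_{H^{-2}}\lesssim\|u_n\|_{L^4}\|\nabla u_n\|_{L^2},
\]
and the right-hand side lies in $L^{4/3}(0,T)$ by Hölder with exponents $4$ and $2$. The linear terms are bounded in $L^2(0,T;H^{-2})$; the penalty term is bounded there for fixed $\varepsilon$. Since $P_n$ is bounded on $H^{-2}$, it follows that $\partial_tu_n$ is bounded in $L^{4/3}(0,T;H^{-2})$.

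\textbf{Step 5 (passage to the limit).} We extract a subsequence with $u_n\rightharpoonup u$ weakly-$*$ in $L^\infty L^2$ and weakly in $L^2H^2$ and $L^4L^4$. The main obstacle is the strong compactness needed for the nonlinearity on the unbounded domain $\R^3$. We apply Aubin--Lions on each ball $B_R$ with the chain $H^2(B_R)\Subset H^1(B_R)\hookrightarrow H^{-2}(B_R)$. This gives $u_n\to u$ strongly in $L^2(0,T;H^1(B_R))$, and a diagonal argument over $R$ makes this hold for every $R$. For a test function $\phi$ with compact support, $\int u_n\cdot\nabla u_n\cdot\phi$ then converges: one factor converges strongly in $L^2L^2_{loc}$ (or $L^2H^1_{loc}$), and the other converges weakly. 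The linear terms pass to the limit weakly. We also need $P_n\phi\to\phi$ in $L^4(0,T;H^2)$, and the initial data converge because $P_nu_0\to u_0$ in $L^2$. Together these yield \eqref{eq:weak-form}.

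\textbf{Step 6 (energy inequality).} The energy inequality \eqref{eq:energy-ineq} follows from weak lower semicontinuity of the norms. For the term $\|u(t)\|_{L^2}$ at a.e.\ $t$, we use $u_n(t)\rightharpoonup u(t)$ in $L^2$ for a.e.\ $t$, which comes from the strong local convergence. Alternatively, we integrate the inequality against nonnegative functions of time and conclude by Lebesgue points.
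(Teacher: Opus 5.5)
Your proposal is correct and follows essentially the same route as the paper: a Fourier-truncated Friedrichs approximation, the energy identity via self-adjointness of the cut-off and Lemma~\ref{lem:cancel}, a bound on $\partial_t u_n$ in $L^{4/3}(0,T;H^{-2})$, Aubin--Lions on balls with a diagonal extraction, and weak lower semicontinuity for \eqref{eq:energy-ineq}. The differences are minor and harmless: you obtain the $L^4(0,T;L^4)$ bound from the Gagliardo--Nirenberg inequality $\|u\|_{4}\lesssim\|u\|_{2}^{5/8}\|\Delta u\|_{2}^{3/8}$ rather than from $H^2\hookrightarrow L^\infty$, which also gives explicit $T^{1/4}$ growth, and you bound and pass to the limit in $\mathcal{N}(u_n)$ directly through $\|u_n\|_{4}\|\nabla u_n\|_{2}$ and strong $L^2(0,T;H^1(B_R))$ convergence instead of using the decomposition \eqref{eq:N-decomp}.
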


Our second main result upgrades weak solutions to unique global strong
solutions under a smallness assumption on the initial data in
$H^2(\R^3)$. A crucial feature of the statement is that the smallness
threshold is \emph{uniform} in the penalization parameter
$\varepsilon>0$.

\begin{thm}[Uniform small-data strong solution]
\label{thm:uniform-small-data-strong}
Let $\nu>0$, $\beta>0$ and $\varepsilon>0$. There exists
$\delta_0>0$, depending only on $\nu$ and $\beta$, but independent of
$\varepsilon$, such that if
\[
u_0\in H^2(\R^3;\R^3),
\qquad
\|u_0\|_{H^2}\le\delta_0,
\]
then problem \eqref{Main-eq} admits a unique global strong solution
\[
u\in C([0,\infty);H^2(\R^3;\R^3))
\cap L^2_{\mathrm{loc}}([0,\infty);H^4(\R^3;\R^3)).
\]
Moreover,
\begin{equation}
\label{eq:uniform-H2-bound}
\sup_{t\ge0}\|u(t)\|_{H^2}^2
+
c_0\int_0^\infty
\left(
\|\nabla u(t)\|_{H^2}^2
+
\|\Delta u(t)\|_{H^2}^2
+
\frac1\varepsilon\|\Div u(t)\|_{H^2}^2
\right)\,dt
\le
\|u_0\|_{H^2}^2,
\end{equation}
where $c_0=c_0(\nu,\beta)>0$ is independent of $\varepsilon$.
\end{thm}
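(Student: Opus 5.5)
We will construct the solution by a Galerkin (or Friedrichs mollification / Fourier truncation) approximation together with a priori estimates in $H^2$ that are uniform in $\varepsilon$. Global existence then follows by a continuity (bootstrap) argument, and uniqueness by an $L^2$ energy estimate for the difference of two solutions. Concretely, we use the spectral cutoff $J_n$ onto frequencies $|\xi|\le n$. The truncated system $\partial_t u_n + J_n(-\nu\Delta u_n - \frac1\varepsilon\nabla\Div u_n + \beta\Delta^2 u_n + \mathcal N(u_n))=0$ is a locally Lipschitz ODE on $J_nL^2$. The $L^2$ estimate of Theorem~\ref{thm:wp} gives global existence of $u_n$, and all linear terms commute with $J_n$ and with derivatives. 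It therefore suffices to derive the estimates below for smooth solutions and then pass to the limit by compactness, exactly as in Steps~3--5 of the proof of Theorem~\ref{thm:wp}. The resulting bounds yield $u\in L^\infty H^2\cap L^2_{\rm loc}H^4$ (from $\Delta u\in L^2H^2$) and $\partial_t u\in L^2_{\rm loc}L^2$ for fixed $\varepsilon$, hence $u\in C([0,\infty);H^2)$ by the Lions--Magenes lemma.

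\emph{Key estimate.} Apply $\Lambda^s$, for $s=0,2$ (equivalently test with $u$ and $\Delta^2 u$), and take the $L^2$ product. Because the linear operator is a Fourier multiplier whose symbol is nonnegative-definite, $\nu|\xi|^2 I+\beta|\xi|^4 I+\frac1\varepsilon \xi\xi^T$, we obtain
\[
\frac12\frac{d}{dt}\|u\|_{H^2}^2+\nu\|\nabla u\|_{H^2}^2+\beta\|\Delta u\|_{H^2}^2+\frac1\varepsilon\|\Div u\|_{H^2}^2 = -\big(\mathcal N(u),u\big)_{H^2}.
\]
The $s=0$ part of the right side vanishes by Lemma~\ref{lem:cancel}. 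For the top-order part we write $(\Delta\mathcal N(u),\Delta u)$ and integrate by parts once, moving one derivative onto $\Delta u$:
\[
|(\Delta \mathcal N(u),\Delta u)|\le \|\nabla\mathcal N(u)\|_{L^2}\|\nabla\Delta u\|_{L^2}\lesssim \big(\|u\|_{L^\infty}\|\nabla^2u\|_{L^2}+\|\nabla u\|_{L^4}^2\big)\|\nabla \Delta u\|_{L^2}.
\]
Using $\|u\|_{L^\infty}\lesssim\|u\|_{H^2}$ and $\|\nabla u\|_{L^4}^2\lesssim \|\nabla u\|_{H^1}^2\lesssim \|u\|_{H^2}\|\nabla u\|_{H^2}$, this is bounded by $C\|u\|_{H^2}\|\nabla u\|_{H^2}^2$, where $C$ is a pure Sobolev constant. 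Crucially, we never use the $\frac1\varepsilon$ term to absorb anything, so $C$ is independent of $\varepsilon$. Hence
\[
\frac{d}{dt}\|u\|_{H^2}^2+\big(2\nu-2C\|u\|_{H^2}\big)\|\nabla u\|_{H^2}^2+2\beta\|\Delta u\|_{H^2}^2+\frac2\varepsilon\|\Div u\|_{H^2}^2\le0 .
\]
We choose $\delta_0=\nu/(2C)$. If $\|u_0\|_{H^2}\le \delta_0$, the bootstrap shows that $\|u(t)\|_{H^2}\le\|u_0\|_{H^2}$ for all $t$: the set of times where $\|u\|_{H^2}< 2\delta_0$ is open and closed, and on it the right side is nonpositive, so the norm is nonincreasing. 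Integrating in time gives \eqref{eq:uniform-H2-bound} with $c_0=\min(\nu,2\beta,2)$, whose value may be adjusted.

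\emph{Uniqueness.} Let $w=u-v$ be the difference of two such solutions. Testing its equation with $w$ removes the linear terms, which come with a good sign. For the nonlinear difference, $\mathcal N(u)-\mathcal N(v)=B(u,w)+B(w,v)$, where $B(a,b)=(a\cdot\nabla)b+\frac12 a\Div b$, and $(B(u,w),w)=0$ by the skew-symmetry of Lemma~\ref{lem:cancel}. The remaining term satisfies $|(B(w,v),w)|\le\|\nabla v\|_{L^\infty}\|w\|_{L^2}^2$, with $\nabla v\in L^2_{\rm loc}L^\infty$ since $v\in L^2H^4$. Gronwall then gives $w=0$.

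The main obstacle is the top-order nonlinear estimate. A careless bound would require $\|\nabla^3 u\|$ on the nonlinearity without absorbing it into dissipation, and it is tempting to use $\frac1\varepsilon\|\Div u\|$ to control the $u\Div u$ part, which would destroy $\varepsilon$-uniformity. The point is to place exactly one extra derivative on $\Delta u$ and to bound everything by $\|u\|_{H^2}\|\nabla u\|_{H^2}^2$. This bound is controlled purely by the viscous term, or alternatively, for the third-derivative part, by the biharmonic term. One should also verify that the commutator structure survives the truncation $J_n$, which holds since $J_n$ is self-adjoint and commutes with derivatives.
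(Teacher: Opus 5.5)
\textbf{The uniqueness step fails as written.} The form $B(a,b)=(a\cdot\nabla)b+\frac12\,a\,\Div b$ does satisfy $B(u,u)=\mathcal N(u)$, but it is not skew in its second argument. In general
\[
(B(u,w),w)=-\frac12\int_{\R^3}|w|^2\,\Div u\,dx+\frac12\int_{\R^3}(u\cdot w)\,\Div w\,dx\neq0 .
\]
Lemma~\ref{lem:cancel} is only a diagonal identity. The polarization that is genuinely skew is Temam's $\tilde B(a,b)=(a\cdot\nabla)b+\frac12(\Div a)\,b$, which gives $(\tilde B(u,w),w)=0$ and hence exactly
\[
\bigl(\mathcal N(u)-\mathcal N(v),w\bigr)=\int_{\R^3}(w\cdot\nabla)v\cdot w\,dx+\frac12\int_{\R^3}(\Div w)\,(v\cdot w)\,dx .
\]
The last integral carries a derivative of $w$. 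Since $\Div w\neq0$ in this penalized setting, it is not bounded by $C(v)\|w\|_2^2$. To see this, take $v=e_1$ on a ball and $w=\chi(x)(\cos kx_2,\sin kx_2,0)$ supported there: the pairing grows like $k\|w\|_2^2$. So the Gronwall inequality you write, with only $\|\nabla v\|_{L^\infty}\|w\|_2^2$ on the right, is not available.

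The repair is to keep the viscous dissipation you discarded:
\[
\frac12\|v\|_\infty\|\nabla w\|_2\|w\|_2\le\frac\nu2\|\nabla w\|_2^2+\frac{C}{\nu}\|v\|_{H^2}^2\|w\|_2^2 ,
\]
after which Gronwall applies. This is precisely why the paper's difference inequality has $\|\nabla w\|_2^2$ on the right-hand side, absorbed into $\nu\|\nabla w\|_2^2$ using the smallness of $\delta_0$. Your Young-inequality version would even give uniqueness in the whole class $C([0,\infty);H^2)\cap L^2_{\mathrm{loc}}H^4$, without smallness of the second solution.

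\textbf{Existence is correct but organized differently from the paper.} The paper builds a local mild solution by a contraction for $S_\varepsilon$ in $C([0,T];H^2)$. It then proves the $H^2$ estimate, justified on Friedrichs approximations, and globalizes through the blow-up alternative. You instead run the bootstrap directly on the $J_n$-truncated problem. There it is exact, because $J_n$ is self-adjoint and commutes with $\Delta$, so $(\Delta J_n\mathcal N(u_n),\Delta u_n)=(\Delta\mathcal N(u_n),\Delta u_n)$. You then pass to the limit by compactness and obtain $u\in C([0,\infty);H^2)$ from Lions--Magenes. This avoids the local theory and the continuation criterion entirely.

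Your top-order bound moves one derivative onto $\Delta u$, so everything is controlled by $C\|u\|_{H^2}\|\nabla u\|_{H^2}^2$. This is shorter than the paper's Leibniz and Gagliardo--Nirenberg expansion. It also makes explicit that only the viscous dissipation is needed, so $\delta_0$ depends on $\nu$ alone.

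\textbf{A slip you share with the paper.} Integration gives
\[
\|u(t)\|_{H^2}^2+c_0\int_0^t(\cdots)\,d\tau\le\|u_0\|_{H^2}^2 \quad\text{for each } t.
\]
Since $t\mapsto\|u(t)\|_{H^2}$ is nonincreasing, $\sup_t\|u(t)\|_{H^2}^2=\|u_0\|_{H^2}^2$. So the displayed bound, with $\sup_t$ and $\int_0^\infty$ added together, can only hold with $2\|u_0\|_{H^2}^2$ on the right, or with the supremum taken of the sum. Adjusting $c_0$ does not fix this.
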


\begin{rem}[Uniformity with respect to $\varepsilon$]\rm
\label{rem:uniform-epsilon-strong}
The smallness threshold $\delta_0$ and the constants in
\eqref{eq:uniform-H2-bound} are independent of $\varepsilon>0$.
Indeed, the only term containing $\varepsilon$ is the penalization
dissipation
\[
\frac1\varepsilon\|\Div u\|_{H^2}^2,
\]
which has a favorable sign: it is either kept as a nonnegative
dissipation term or discarded. No estimate in the proof requires
bounding this term from above by a quantity involving $1/\varepsilon$.
This uniformity is what makes
Theorem~\ref{thm:uniform-small-data-strong} a genuine statement about
the penalized \emph{family} $(\ref{Main-eq})_{\varepsilon>0}$,
and not merely about a fixed member of it.
\end{rem}

Our third main result establishes the optimal large-time decay of the
strong solution and of its derivatives, again uniformly in
$\varepsilon$.

\begin{thm}[Optimal decay estimates, uniformly in $\varepsilon$]
\label{thm:optimal-decay}
Let $\nu>0$, $\beta>0$ and $\varepsilon>0$, and let $\delta_0>0$
be the smallness threshold from
Theorem~\ref{thm:uniform-small-data-strong}. Then there exists
$\delta_*\in(0,\delta_0]$, depending only on $\nu$ and $\beta$,
but independent of $\varepsilon$, such that if
\[
u_0\in L^1(\R^3;\R^3)\cap H^2(\R^3;\R^3),
\qquad
\|u_0\|_{L^1}+\|u_0\|_{H^2}\le \delta_*,
\]
then the corresponding global strong solution satisfies, for every
$t\ge0$,
\begin{equation}
\label{eq:L2-decay-final}
\|u(t)\|_{L^2}\le C(1+t)^{-3/4},
\end{equation}
and, for $k=1,2$,
\begin{equation}
\label{eq:derivative-decay-final}
\|\nabla^ku(t)\|_{L^2}
\le
C(1+t)^{-3/4-k/2}.
\end{equation}
Here $C>0$ depends on $\nu$, $\beta$ and
$\|u_0\|_{L^1\cap H^2}$, but is independent of
$\varepsilon>0$.
\end{thm}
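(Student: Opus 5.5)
We will prove the theorem with the Fourier splitting method of Schonbek, applied to the energy inequalities that come with the strong solution of Theorem~\ref{thm:uniform-small-data-strong}. Throughout we never estimate the penalization dissipation from above, so all constants stay independent of $\varepsilon$.

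\textbf{Step 1: the basic energy identity.} The strong solution satisfies
\[
\frac{d}{dt}\|u\|_{L^2}^2+2\nu\|\nabla u\|_{L^2}^2+2\beta\|\Delta u\|_{L^2}^2+\frac2\varepsilon\|\Div u\|_{L^2}^2=0 .
\]
We drop the $\varepsilon$-term and the $\beta$-term. Fix $\rho(t)=\sqrt{m/(2\nu(1+t))}$ with $m>3/2$ large, and split frequencies at the sphere $|\xi|=\rho(t)$. Outside the ball we have $\nu\|\nabla u\|^2\ge \nu\rho^2\int_{|\xi|\ge\rho}|\hat u|^2$. This gives
\[
\frac{d}{dt}\|u\|_{L^2}^2+\frac{m}{1+t}\|u\|_{L^2}^2\le \frac{m}{1+t}\int_{|\xi|\le\rho(t)}|\hat u(t,\xi)|^2\,d\xi .
\]

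\textbf{Step 2: a pointwise bound on $\hat u$ at low frequencies.} Write the equation in Fourier variables. The linear symbol is
\[
\nu|\xi|^2 I+\beta|\xi|^4 I+\varepsilon^{-1}\xi\otimes\xi ,
\]
which is a nonnegative symmetric matrix. By Duhamel, and since $e^{-tA}$ is a contraction for $A\ge0$, we get
\[
|\hat u(t,\xi)|\le|\hat u_0(\xi)|+\int_0^t|\widehat{\mathcal N(u)}(s,\xi)|\,ds .
\]
This contraction property is exactly where uniformity in $\varepsilon$ enters: the penalization only adds damping in the longitudinal direction.

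For the nonlinearity we use $\mathcal N(u)=\nabla\cdot(u\otimes u)-\tfrac12 u\Div u$, which gives
\[
|\widehat{\mathcal N(u)}(\xi)|\le C|\xi|\,\|u\|_{L^2}^2+C\|u\|_{L^2}\|\Div u\|_{L^2}.
\]
The second term has no $|\xi|$ factor. We control it by Cauchy--Schwarz in time, via $\int_0^t\|u\|\|\Div u\|\le (\int_0^t\|u\|^2)^{1/2}(\int_0^\infty\|\Div u\|^2)^{1/2}$. The last factor is bounded by $\varepsilon\|u_0\|^2$. To keep the bound independent of $\varepsilon$, we instead use $\|\Div u\|\le\|\nabla u\|$ and $\int_0^\infty\|\nabla u\|^2\le C\|u_0\|^2/\nu$. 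This yields
\[
|\hat u(t,\xi)|\le \|u_0\|_{L^1}+C|\xi|\int_0^t\|u\|^2\,ds+C\|u_0\|\Big(\int_0^t\|u\|^2\Big)^{1/2}.
\]

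\textbf{Step 3: integrating over the ball and bootstrapping.} Integrating over $|\xi|\le\rho$, with $\rho^3\sim(1+t)^{-3/2}$, gives a preliminary rate. Using $\|u\|\le\|u_0\|$, we first obtain $\|u(t)\|^2\le C\ln^{k}(e+t)$-type or $(1+t)^{-\alpha}$ bounds. We then iterate: each improved rate for $\int_0^t\|u\|^2$ improves the bound in Step 2, until the low-frequency contribution becomes $C(1+t)^{-3/2}$. Multiplying by $(1+t)^m$ and integrating then gives \eqref{eq:L2-decay-final}. I expect the main obstacle to be the non-divergence term $\tfrac12 u\Div u$, whose Fourier transform lacks the gain $|\xi|$. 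If the crude bound above does not close to the optimal rate, I would instead bound it by $\|u\|\,\|\nabla u\|$ and use the decay of $\|\nabla u\|$ obtained simultaneously in Step 4, running a joint bootstrap. Smallness $\delta_*$ is used to absorb the nonlinear contributions.

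\textbf{Step 4: derivatives.} We apply $\nabla^k$, $k=1,2$, and take $L^2$ products. The terms $\nu$, $\beta$ and $\varepsilon^{-1}$ remain dissipative. We bound the nonlinear terms by $C\|u\|_{H^2}\|\nabla^{k+1}u\|^2$ plus lower-order pieces, as in the proof of Theorem~\ref{thm:uniform-small-data-strong}, and absorb them using $\sup_t\|u\|_{H^2}\le\delta_*$. This gives
\[
\frac{d}{dt}\|\nabla^k u\|^2+\nu\|\nabla^{k+1}u\|^2\le0 .
\]
We run Fourier splitting again. The low-frequency part is bounded by $\rho^{2k}\|u\|^2\lesssim(1+t)^{-k-3/2}$, using the $L^2$ rate from Step 3. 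This yields \eqref{eq:derivative-decay-final}.
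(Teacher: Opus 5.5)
Your argument is correct in substance, but it differs from the paper's at two places.

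\textbf{The $L^2$ rate.} The paper does not iterate. It assumes the target bound $E_0(t)\le M_0(1+t)^{-3/2}$ on $[0,T]$, feeds $\int_0^tE_0\le 2M_0$ into the same pointwise Duhamel bound you wrote, and closes the bootstrap in one pass by requiring $\|u_0\|_{L^1}$ and $\|u_0\|_{L^2}$ small. Your Schonbek iteration also closes, so you need not hedge. Starting from $E_0\le\|u_0\|_2^2$, the successive rates (for any $m>3/2$) are $(1+t)^{-1/2}$, $(1+t)^{-1}$, $(1+t)^{-3/2}\ln(e+t)$, and finally $(1+t)^{-3/2}$. The last step works because once $\int_0^\infty\|u\|_2^2\,ds<\infty$, your Step~2 bound on $|\widehat u(t,\xi)|$ is uniform in $t$ on the ball. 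You therefore obtain the $L^2$ decay with no smallness at all, at the cost of a few extra lines.

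Your control of $\int_0^t\|u\|_2\|\Div u\|_2\,ds$ is also better than the paper's. You use $\|\Div u\|_2\le\|\nabla u\|_2$ and $\int_0^\infty\|\nabla u\|_2^2\le\|u_0\|_2^2/(2\nu)$. The paper instead writes $\int_0^\infty\|\Div u\|_2^2\le\frac{\varepsilon}{2}\|u_0\|_2^2\le\frac12\|u_0\|_2^2$, which is uniform only for $\varepsilon\le1$.

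\textbf{The derivatives.} The paper does not absorb the nonlinearity. It derives $\frac{d}{dt}E_1+c_1D_1\le CE_1^3$ and $\frac{d}{dt}E_2+c_2D_2\le CE_1^2E_2+CE_2^{5/3}$, then closes two further bootstraps using smallness. Your route is cleaner: the linear inequality $\frac{d}{dt}\|\nabla^ku\|_2^2+\nu\|\nabla^{k+1}u\|_2^2\le0$, then plain Fourier splitting with $m>k+3/2$, fed by the $L^2$ rate. It works, but not for the reason you give.

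The estimates in the proof of Theorem~\ref{thm:uniform-small-data-strong} are inhomogeneous. At level $|\alpha|=1$ they produce $C\|u\|_{H^2}\|\nabla u\|_2^2$, and at $|\alpha|=2$ they produce $C\|u\|_{H^2}\|\nabla^2u\|_2^2$. Such lower-order pieces cannot be absorbed into $\|\nabla^{k+1}u\|_2^2$ on $\R^3$. Keeping them leaves $\frac{d}{dt}E_k+\nu D_k\le C\delta_*E_k$, which destroys the decay mechanism.

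What you need is the scale-invariant bound
\[
\Bigl|\int_{\R^3}\nabla^k\mathcal{N}(u)\cdot\nabla^ku\,dx\Bigr|\le C\|u\|_3\,\|\nabla^{k+1}u\|_2^2,\qquad k=1,2.
\]
\begin{itemize}
\item Terms with $u$ undifferentiated are bounded by $\|u\|_3\|\nabla^ku\|_6\|\nabla^{k+1}u\|_2\le C\|u\|_3\|\nabla^{k+1}u\|_2^2$.
\item The remaining terms are $(\nabla u)^3$ for $k=1$ and $\nabla u\,\nabla^2u\,\nabla^2u$ for $k=2$. Integrating by parts to move the derivative off the least-differentiated factor reduces them to the first form.
\end{itemize}
Since $\|u\|_3\le C\|u\|_{H^1}\le C\delta_*$, absorption then yields exactly your linear inequality, and the rest of Step~4 goes through.
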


\begin{rem}[Sharpness of the rates]
\label{rem:sharpness}\rm 
The decay rates
\[
\|\nabla^ku(t)\|_{L^2}\le C(1+t)^{-3/4-k/2},
\qquad k=0,1,2,
\]
coincide with the rates of the heat semigroup in three space
dimensions for data in $L^1\cap L^2$, namely
$\|\nabla^k e^{t\Delta}u_0\|_{L^2}\le Ct^{-3/4-k/2}\|u_0\|_{L^1}$.
They are already attained at the linear level. Observe first that if
$u_0$ is divergence-free, then the penalization is invisible: since
$\xi\cdot\widehat{u_0}(\xi)=0$ and $\xi$ spans an eigendirection of
the symmetric matrix $M_\varepsilon(\xi)$ in \eqref{eq:Meps}, the
representation \eqref{eq:semigroup-fourier} reduces to
\[
\widehat{S_\varepsilon(t)u_0}(\xi)
=
e^{-t(\nu|\xi|^2+\beta|\xi|^4)}\,\widehat{u_0}(\xi).
\]
Let now $u_0\in L^2(\R^3;\R^3)$ be divergence-free and non-degenerate
at low frequencies, in the sense that
$|\widehat{u_0}(\xi)|\ge c_*>0$ for almost every $0<|\xi|\le\rho$ and
some $\rho>0$; such fields are easily constructed on the Fourier side
by selecting, measurably in $\xi$, a unit vector orthogonal to $\xi$
on the ball $\{|\xi|\le\rho\}$. Then, for $t\ge1$,
\[
\|S_\varepsilon(t)u_0\|_{L^2}^2
\ge
c_*^2\int_{|\xi|\le\min(\rho,\,t^{-1/2})}
e^{-2t(\nu|\xi|^2+\beta|\xi|^4)}\,d\xi
\ge
c\,(1+t)^{-3/2},
\]
uniformly in $\varepsilon$: the low frequencies evolve essentially by
the heat flow, the biharmonic contribution $\beta|\xi|^4$ being
negligible with respect to $\nu|\xi|^2$ as $\xi\to 0$. We point out
that the familiar pointwise normalization $\widehat{u_0}(0)\neq0$ from
the scalar heat equation is not available in the divergence-free
class: if $u_0\in L^1(\R^3;\R^3)$ satisfies $\Div u_0=0$, then letting
$\xi\to0$ along each coordinate direction in the identity
$\xi\cdot\widehat{u_0}(\xi)=0$ and using the continuity of
$\widehat{u_0}$ gives $\widehat{u_0}(0)=\int_{\R^3}u_0\,dx=0$; this is
why the non-degeneracy is imposed on an annulus of low frequencies
rather than at the origin. Thus the rates in
Theorem~\ref{thm:optimal-decay} are optimal at the level of the
assumptions used here, and the biharmonic term, while decisive for the
high-frequency analysis, does not accelerate the large-time decay.
\end{rem}

\begin{rem}[Comparison with the damped Navier--Stokes equations]\rm 
\label{rem:comparison-damping}
It is instructive to compare Theorem~\ref{thm:optimal-decay} with the
decay theory for the damped system \eqref{NSE-damping} developed
in~\cite{Su24,ZAMP2020,Zhou2026}, and with the classical Fourier
splitting results of Schonbek~\cite{Schonbek1985,Schonbek1986} and
Wiegner~\cite{Wiegner1987} for the Navier--Stokes equations. In all
these situations the leading decay mechanism is the heat flow acting
on the low frequencies, and the additional structure (absorption,
fractional dissipation, or, here, biharmonic damping and penalization)
enters only through the control of the nonlinear and high-frequency
contributions. Our proof follows this philosophy, with two specific
features: the low-frequency analysis must accommodate the
non-divergence-free character of $u$, which produces the extra term
$u\,\Div u$ in the Duhamel representation, and all constants must be
tracked so as to remain uniform in $\varepsilon$.
\end{rem}

\subsection{Comments and open questions}

\begin{itemize}
\item[(1)] \emph{The incompressible limit.} The uniform estimates
\eqref{eq:uniform-H2-bound} and
\eqref{eq:L2-decay-final}--\eqref{eq:derivative-decay-final} suggest
that, along a subsequence, the strong solutions
$u^\varepsilon$ converge as $\varepsilon\to0$ to a limit $v$ with
$\Div v=0$, which should solve the incompressible hyperviscous
Navier--Stokes system with a pressure recovered as the weak limit of
$-\tfrac1\varepsilon\Div u^\varepsilon$. Identifying the limit system,
proving convergence rates in $\varepsilon$, and comparing with the
numerical penalty literature~\cite{Temam1968,Shen1995} are natural
directions that we do not pursue here.

\item[(2)] \emph{Large data.} For the incompressible hyperviscous
Navier--Stokes equations, global regularity for arbitrary data is
known since Lions~\cite{Lions1969}, because the dissipation
$(-\Delta)^2$ is above the critical strength $(-\Delta)^{5/4}$;
see also~\cite{KP2002,Tao2009}. Whether an analogous large-data global
regularity result holds for the penalized system \eqref{Main-eq},
\emph{uniformly} in $\varepsilon$, is an interesting open question:
the skew-symmetric compressible structure of $\mathcal{N}$ modifies
the higher-order energy estimates, and we do not claim such a result
here.

\item[(3)] \emph{The role of the pressure.} System \eqref{Main-eq} is
stated without a pressure term; see
Remark~\ref{rem:model-scope} below for a precise discussion of the
scope of our results in relation to formulations containing
$\nabla p$.
\end{itemize}

\subsection{Strategy of the proofs}

To prove Theorem~\ref{thm:wp} we employ a Friedrichs-type Galerkin
approximation: we project the equation onto Fourier balls, derive
uniform a priori estimates, and use compactness arguments based on the
Aubin--Lions--Simon lemma to pass to the limit. A noteworthy point,
explained in Remark~\ref{rem:no-leray}, is that the approximation must
\emph{not} involve the Leray projector, since projecting onto
divergence-free fields would annihilate the penalization term and
change the limit problem.

The proof of Theorem~\ref{thm:uniform-small-data-strong} combines a
local existence theory, obtained by a fixed point argument for the
mild formulation associated with the semigroup generated by
$L_\varepsilon:=\nu\Delta-\beta\Delta^2+\tfrac1\varepsilon\nabla\Div$,
with a uniform $H^2$ energy estimate in which the nonlinear
contributions are absorbed by the viscous and biharmonic dissipation
for small data. The key structural observation is that the
penalization only ever appears with a favorable sign.

Theorem~\ref{thm:optimal-decay} is proved by the Fourier splitting
method of Schonbek~\cite{Schonbek1985,Schonbek1986}, adapted to the
penalized, non-divergence-free setting: the $L^2$ decay is obtained
through a bootstrap on the low-frequency part of $\widehat u$,
estimated via the Duhamel formula and the decomposition
\eqref{eq:N-decomp} of the nonlinearity, and the derivative decay
follows from differentiated energy inequalities combined with
Gagliardo--Nirenberg interpolation and further Fourier splitting.

\subsection{Organization and notation}

The remainder of the paper is organized as follows. In
Section~\ref{sec:prelim} we collect the algebraic and
functional-analytic preliminaries: the cancellation identity for the
nonlinearity, a useful decomposition of $\mathcal{N}$, and the
analysis of the semigroup generated by the linear part.
Section~\ref{sec:existence} contains the proof of
Theorem~\ref{thm:wp}. Section~\ref{sec:strong-solutions} is devoted to
the proof of Theorem~\ref{thm:uniform-small-data-strong}, and
Section~\ref{sec:optimal-decay} to the proof of
Theorem~\ref{thm:optimal-decay}.

Throughout the paper, $\|\cdot\|_p:=\|\cdot\|_{L^p(\R^3)}$ denotes the
Lebesgue norm, and $H^s(\R^3)$ is the usual $L^2$-based Sobolev space.
For integer $s\ge0$ we work with the norm
$\|f\|_{H^s}^2:=\sum_{|\alpha|\le s}\|D^\alpha f\|_2^2$, which, by
Plancherel's theorem, is equivalent to the Fourier norm
$\|(1+|\xi|^2)^{s/2}\widehat f\,\|_2$; we pass freely between the two
descriptions. We write $\widehat{f}$ for the Fourier
transform of $f$, and $A:B$ for the Frobenius product of two matrices.
The letter $C$ denotes a positive constant, depending only on $\nu$
and $\beta$ unless otherwise indicated, whose value may change from
line to line; $C$ is always independent of $\varepsilon>0$. We
assume without loss of generality that $C\ge1$.

\section{Preliminaries}
\label{sec:prelim}

\subsection{The nonlinearity}

We begin with the basic cancellation property of the skew-symmetric
form \eqref{def:N}, which is the algebraic cornerstone of the paper.

\begin{lem}[$L^2$-cancellation of the transport term]
\label{lem:cancel}
For every sufficiently regular vector field $u$ on $\R^3$,
\begin{equation}
    \label{eq:cancel}
    \int_{\R^3} \mathcal{N}(u)\cdot u\,dx = 0.
\end{equation}
\end{lem}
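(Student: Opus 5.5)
The plan is to compute the two contributions to $\int\mathcal{N}(u)\cdot u\,dx$ separately and show that the transport part produces exactly $-\tfrac12\int|u|^2\Div u\,dx$ after one integration by parts, which the Temam correction then cancels. Throughout, ``sufficiently regular'' will be taken to mean, say, $u\in H^2(\R^3)$ (or $u$ in the Schwartz class, with the general case obtained by density). At that level all the integrands below are in $L^1$ and the boundary terms vanish.

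\textbf{Step 1: the transport term.} Writing in coordinates,
\[
(u\cdot\nabla)u\cdot u=\sum_{i,j}u_j\,\partial_j u_i\,u_i=\tfrac12\sum_j u_j\,\partial_j|u|^2=\tfrac12\,u\cdot\nabla|u|^2 .
\]
Integrating by parts on $\R^3$ then gives
\[
\int_{\R^3}(u\cdot\nabla)u\cdot u\,dx=-\tfrac12\int_{\R^3}|u|^2\,\Div u\,dx .
\]

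\textbf{Step 2: the correction term.} Trivially,
\[
\tfrac12\int u\,\Div u\cdot u\,dx=\tfrac12\int|u|^2\Div u\,dx .
\]
Adding this to Step 1 yields \eqref{eq:cancel}.

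\textbf{Main obstacle: justifying the integration by parts.} This is the only point that needs care. For $u\in C_c^\infty$ it is immediate. For $u\in H^2(\R^3)$, the Sobolev embedding gives $u\in L^\infty\cap L^2$ and $\nabla u\in L^2$. Hence $|u|^2\in W^{1,1}$, and $u|u|^2\in L^2$ with $\Div(u|u|^2)\in L^1$. The identity $\int\Div(u|u|^2)\,dx=0$ then holds by approximating $u$ with $C_c^\infty$ fields in $H^2$. Indeed, the trilinear form $(a,b,c)\mapsto\int a_j\partial_j b_i c_i$ is continuous on $H^2\times H^1\times L^2$ by H\"older with the exponents $(\infty,2,2)$, so both sides pass to the limit. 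The same continuity will also allow applying the lemma to the Galerkin approximants used later.
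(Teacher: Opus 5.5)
Your proof is correct and matches the paper's argument: you write $(u\cdot\nabla)u\cdot u=\tfrac12\,u\cdot\nabla|u|^2$, integrate by parts to obtain $-\tfrac12\int|u|^2\Div u\,dx$, and note that the Temam correction contributes exactly $+\tfrac12\int|u|^2\Div u\,dx$. The paper leaves ``sufficiently regular'' unspecified, so your justification for $u\in H^2(\R^3)$ is a valid addition: density of $C_c^\infty$ in $H^2$ together with continuity of the trilinear forms covers both the Galerkin approximants and the strong solutions to which the lemma is later applied.
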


\begin{proof}
By integration by parts,
\[
\int_{\R^3} (u\cdot\nabla)u\cdot u\,dx
= \frac12 \int_{\R^3} (u\cdot\nabla)|u|^2\,dx
= -\frac12\int_{\R^3}|u|^2\,\Div u\,dx,
\]
while
\[
\int_{\R^3}\frac12\,u\,\Div u\cdot u\,dx
= \frac12\int_{\R^3}|u|^2\,\Div u\,dx.
\]
Adding the two identities gives \eqref{eq:cancel}.
\end{proof}

The following example shows that the correction term
$\frac12\,u\,\Div u$ is genuinely needed: for compressible fields, the
convective term alone does not enjoy the cancellation
\eqref{eq:cancel}.

\begin{exam}[Failure of the cancellation without the Temam correction]
\label{ex:no-correction}
Let $\chi(x):=e^{-|x|^2}$ and $u:=\nabla\chi$, so that
$\Div u=\Delta\chi\not\equiv0$. Using
$\int_{\R^3}(u\cdot\nabla)u\cdot u\,dx
=-\frac12\int_{\R^3}|u|^2\Div u\,dx$ and passing to spherical
coordinates, a direct computation with Gaussian moments gives
\[
\int_{\R^3}|\nabla\chi|^2\,\Delta\chi\,dx
=16\pi\int_0^\infty r^4\bigl(4r^2-6\bigr)e^{-3r^2}\,dr
=-\frac{16\pi}{9}\sqrt{\frac{\pi}{3}},
\]
whence
\[
\int_{\R^3}(u\cdot\nabla)u\cdot u\,dx
=\frac{8\pi}{9}\sqrt{\frac{\pi}{3}}>0 .
\]
Thus the convective term alone injects energy for this field, while
the corrected nonlinearity $\mathcal{N}(u)$ satisfies
\eqref{eq:cancel}. This is why the unconditional energy identity
\eqref{eq:formal-energy} holds for \eqref{Main-eq}, in contrast with
the classical compressible Navier--Stokes system.
\end{exam}

\begin{rem}[Useful decomposition of $\mathcal{N}$]\rm 
\label{rem:N-decomp}
Since
\[
(u\cdot\nabla)u=\Div(u\otimes u)-u\,\Div u,
\]
we have
\begin{equation}
    \label{eq:N-decomp}
    \mathcal{N}(u)
    = \Div(u\otimes u)
      - \frac12\,u\,\Div u.
\end{equation}
This identity is crucial both in the compactness argument of
Section~\ref{sec:existence} and in the low-frequency analysis of
Section~\ref{sec:optimal-decay}: the divergence-form part contributes
a factor $|\xi|$ on the Fourier side, while the compressible part is
controlled by the penalization dissipation.
\end{rem}

\begin{rem}[Scope of the well-posedness result]\rm 
\label{rem:model-scope}
The theory established in this paper is developed for the reduced
system \eqref{Main-eq}. If one starts instead from a formulation with
an additional pressure term $\nabla p$, then further information (an
equation of state, or a divergence constraint) is needed to determine
$p$ separately. Since such a closure is not imposed here, all results
are stated directly for \eqref{Main-eq}.
\end{rem}

\subsection{The linear semigroup}
\label{subsec:semigroup}

Rewriting the linear part of \eqref{Main-eq} as an evolution equation
gives
\[
\partial_t u=L_\varepsilon u-\mathcal{N}(u),
\]
where
\begin{equation}
\label{eq:Leps-correct}
L_\varepsilon
:=
\nu\Delta-\beta\Delta^2+\frac1\varepsilon\nabla\Div.
\end{equation}
In Fourier variables,
\[
\widehat{L_\varepsilon f}(\xi)
=
-\left[
(\nu|\xi|^2+\beta|\xi|^4)I
+\frac1\varepsilon \xi\otimes\xi
\right]\widehat f(\xi).
\]
Thus, if $S_\varepsilon(t):=e^{tL_\varepsilon}$, then
\begin{equation}
\label{eq:semigroup-fourier}
\widehat{S_\varepsilon(t)f}(\xi)
=
e^{-tM_\varepsilon(\xi)}\widehat f(\xi),
\end{equation}
where
\begin{equation}
\label{eq:Meps}
M_\varepsilon(\xi)
=
(\nu|\xi|^2+\beta|\xi|^4)I
+\frac1\varepsilon \xi\otimes\xi .
\end{equation}
The matrix $M_\varepsilon(\xi)$ is symmetric and, for every
$z\in\R^3$,
\[
\langle M_\varepsilon(\xi)z,z\rangle
=
(\nu|\xi|^2+\beta|\xi|^4)|z|^2
+\frac1\varepsilon|\xi\cdot z|^2
\ge
\nu|\xi|^2|z|^2 .
\]
Consequently
$\|e^{-tM_\varepsilon(\xi)}\|_{\mathcal L(\R^3)}
\le e^{-\nu t|\xi|^2}\le 1$, so that the semigroup satisfies
\begin{equation}
\label{eq:semigroup-L2}
\|S_\varepsilon(t)f\|_2\le \|f\|_2,
\end{equation}
and, for each integer $m\ge0$,
\begin{equation}
\label{eq:semigroup-smoothing}
\|\nabla^m S_\varepsilon(t)f\|_2
\le
C_m t^{-m/2}\|f\|_2,
\qquad t>0.
\end{equation}
The constants in \eqref{eq:semigroup-L2}--\eqref{eq:semigroup-smoothing}
are independent of $\varepsilon>0$, since they only use the
lower bound $\langle M_\varepsilon(\xi)z,z\rangle\ge\nu|\xi|^2|z|^2$,
in which the penalization contributes with a favorable sign.

\begin{rem}\rm 
\label{rem:semigroup-biharmonic}
Using instead the lower bound
$\langle M_\varepsilon(\xi)z,z\rangle\ge\beta|\xi|^4|z|^2$, one obtains
the biharmonic smoothing rate
$\|\nabla^m S_\varepsilon(t)f\|_2\le C_m t^{-m/4}\|f\|_2$, which is
stronger for short times. The heat-type rate
\eqref{eq:semigroup-smoothing} is however the relevant one for the
large-time analysis of Section~\ref{sec:optimal-decay}, since the
biharmonic symbol $\beta|\xi|^4$ is negligible with respect to
$\nu|\xi|^2$ in the low-frequency regime.
\end{rem}

\section{Existence of weak solutions: proof of Theorem~\ref{thm:wp}}
\label{sec:existence}

This section is devoted to the proof of Theorem~\ref{thm:wp}. The proof is developed through the following steps.

\leavevmode

\noindent\textbf{Step 1. Friedrichs approximation.}
For $n\in\N$, let $J_n$ denote the Fourier cut-off operator with
symbol $\mathbf{1}_{\{|\xi|\le n\}}$, and set
\[
X_n:=\bigl\{f\in L^2(\R^3;\R^3):
\operatorname{supp}\widehat{f}\subset \overline{B(0,n)}\bigr\}.
\]
Since $J_n$ is the orthogonal projection of $L^2$ onto $X_n$, we
consider the regularized problem
\begin{equation}
    \label{eq:approx}
    \partial_t u_n
    - \nu\Delta u_n
    + \beta\Delta^2u_n
    - \frac1\varepsilon \nabla\Div u_n
    + J_n\mathcal{N}(u_n)=0,
    \qquad
    u_n(0)=J_nu_0.
\end{equation}

The operators $\Delta$, $\Delta^2$, and $\nabla\Div$
preserve $X_n$, and so does $J_n\mathcal{N}$. Moreover, on $X_n$ the
Bernstein inequalities imply the equivalence of all Sobolev norms, hence
\[
\|J_n\mathcal{N}(v)-J_n\mathcal{N}(w)\|_{2}
\le C_n\bigl(\|v\|_{2}+\|w\|_{2}\bigr)\|v-w\|_{2}
\qquad \text{for all } v,w\in X_n.
\]
Thus the right-hand side of \eqref{eq:approx} is locally Lipschitz on
the Banach space $X_n$, and the Banach-space Picard theorem yields a
unique maximal solution
\[
u_n\in C^1([0,T_n);X_n).
\]

\medskip

\noindent\textbf{Step 2. Uniform energy estimate.}
Taking the $L^2$-inner product of \eqref{eq:approx} with $u_n$, we obtain
\[
\frac12\frac{d}{dt}\|u_n\|_{2}^2
+ \nu\|\nabla u_n\|_{2}^2
+ \beta\|\Delta u_n\|_{2}^2
+ \frac1\varepsilon\|\Div u_n\|_{2}^2
+ \int_{\R^3} J_n\mathcal{N}(u_n)\cdot u_n\,dx
=0.
\]
Since $J_n$ is self-adjoint on $L^2$ and $J_nu_n=u_n$,
\[
\int_{\R^3} J_n\mathcal{N}(u_n)\cdot u_n\,dx
=
\int_{\R^3} \mathcal{N}(u_n)\cdot J_nu_n\,dx
=
\int_{\R^3} \mathcal{N}(u_n)\cdot u_n\,dx
=0
\]
by Lemma~\ref{lem:cancel}. Therefore,
\begin{equation}
    \label{eq:energy-approx}
    \frac12\frac{d}{dt}\|u_n\|_{2}^2
    + \nu\|\nabla u_n\|_{2}^2
    + \beta\|\Delta u_n\|_{2}^2
    + \frac1\varepsilon\|\Div u_n\|_{2}^2
    =0.
\end{equation}
Integrating over $(0,t)$ and using $\|J_nu_0\|_2\le\|u_0\|_2$, we infer
\begin{equation}
    \label{eq:uniform-energy}
    \|u_n(t)\|_{2}^2
    + 2\nu\int_0^t\|\nabla u_n(\tau)\|_{2}^2\,d\tau
    + 2\beta\int_0^t\|\Delta u_n(\tau)\|_{2}^2\,d\tau
    + \frac{2}{\varepsilon}\int_0^t
        \|\Div u_n(\tau)\|_{2}^2\,d\tau
    \le \|u_0\|_{2}^2
\end{equation}
for all $t\in[0,T_n)$. In particular, $\|u_n(t)\|_{2}$ cannot blow up,
so the solution extends globally and $T_n=\infty$.

\medskip

\noindent\textbf{Step 3. $L^4(0,T;L^4)$ bound.}
Fix $T>0$. Since $H^2(\R^3)\hookrightarrow L^\infty(\R^3)$, we have
\[
\|u_n\|_{4}^4
\le \|u_n\|_{2}^2 \|u_n\|_{\infty}^2
\le C \|u_n\|_{2}^2 \|u_n\|_{H^2}^2.
\]
Moreover, by Plancherel's theorem and the elementary inequality
$(1+|\xi|^2)^2\le 2(1+|\xi|^4)$,
\[
\|u_n\|_{H^2}^2 \le C\bigl(\|u_n\|_{2}^2 + \|\Delta u_n\|_{2}^2\bigr).
\]
Hence
\[
\|u_n\|_{4}^4
\le C\|u_n\|_{2}^2
\bigl(\|u_n\|_{2}^2 + \|\Delta u_n\|_{2}^2\bigr).
\]
Integrating over $(0,T)$ and using \eqref{eq:uniform-energy}, we obtain
\begin{align}
\int_0^T \|u_n(\tau)\|_{4}^4\,d\tau
&\le
C \sup_{\tau\in[0,T]}\|u_n(\tau)\|_{2}^2
\int_0^T \bigl(\|u_n(\tau)\|_{2}^2 + \|\Delta u_n(\tau)\|_{2}^2\bigr)\,d\tau
\notag\\
&\le
C \|u_0\|_{2}^2
\left(
T\|u_0\|_{2}^2 + \frac{1}{2\beta}\|u_0\|_{2}^2
\right).
\label{eq:L4bound}
\end{align}
Therefore,
\[
\int_0^T \|u_n(\tau)\|_{4}^4\,d\tau
\le C(T,\beta,\|u_0\|_{2}),
\]
uniformly in $n$.

\medskip

\noindent\textbf{Step 4. Uniform bound on $\partial_tu_n$.}
From \eqref{eq:approx},
\[
\partial_tu_n
= \nu\Delta u_n
- \beta\Delta^2u_n
+ \frac1\varepsilon \nabla\Div u_n
- J_n\mathcal{N}(u_n).
\]

For the linear terms,
\[
\|\Delta u_n\|_{H^{-2}} \le C\|u_n\|_{2},\qquad
\|\Delta^2u_n\|_{H^{-2}} \le \|\Delta u_n\|_{2},\qquad
\|\nabla\Div u_n\|_{H^{-2}}
\le C\|\Div u_n\|_{2},
\]
so, by \eqref{eq:uniform-energy}, they are bounded in
$L^2(0,T;H^{-2})$ uniformly in $n$; note that the bound for the
penalization term depends on $\varepsilon$, which is fixed throughout
this proof.

For the nonlinear term, use the decomposition \eqref{eq:N-decomp}:
\[
\mathcal{N}(u_n)
= \Div(u_n\otimes u_n)
  - \frac12\,u_n\,\Div u_n.
\]
Let $\psi\in H^2(\R^3;\R^3)$ with $\|\psi\|_{H^2}\le 1$.
Then
\[
\bigl|\langle \Div(u_n\otimes u_n),\psi\rangle\bigr|
=
\left|\int_{\R^3}(u_n\otimes u_n):\nabla\psi\,dx\right|
\le \|u_n\|_{4}^2\|\nabla\psi\|_{2}
\le C\|u_n\|_{4}^2,
\]
and, using $H^2(\R^3)\hookrightarrow L^4(\R^3)$,
\[
\left|\left\langle \frac12\,u_n\,\Div u_n,\psi\right\rangle\right|
\le C\|u_n\|_{4}\|\Div u_n\|_{2}\|\psi\|_{4}
\le C\|u_n\|_{4}\|\Div u_n\|_{2}.
\]
Therefore
\[
\|\mathcal{N}(u_n)\|_{H^{-2}}
\le C\Bigl(\|u_n\|_{4}^2
+\|u_n\|_{4}\|\Div u_n\|_{2}\Bigr).
\]
Since $J_n$ is bounded on $H^{-2}$ uniformly in $n$,
\[
\|J_n\mathcal{N}(u_n)\|_{H^{-2}}
\le C\Bigl(\|u_n\|_{4}^2
+\|u_n\|_{4}\|\Div u_n\|_{2}\Bigr).
\]
By \eqref{eq:L4bound}, $t\mapsto\|u_n(t)\|_4^2$ is bounded in
$L^2(0,T)$, and, by H\"older's inequality in time (with exponents $4$
and $2$) together with \eqref{eq:L4bound} and \eqref{eq:uniform-energy},
$t\mapsto\|u_n(t)\|_4\|\Div u_n(t)\|_2$ is bounded in $L^{4/3}(0,T)$,
both uniformly in $n$. We conclude that
\begin{equation}
    \label{eq:dtbound}
    \{\partial_tu_n\}_{n\ge1}
    \quad\text{is bounded in}\quad
    L^{4/3}(0,T;H^{-2}(\R^3)).
\end{equation}

\medskip

\noindent\textbf{Step 5. Compactness.}
Fix $R>0$. By \eqref{eq:uniform-energy}, $\{u_n\}$ is bounded in
$L^2(0,T;H^2(B_R))$, and by \eqref{eq:dtbound}, $\{\partial_tu_n\}$ is
bounded in $L^{4/3}(0,T;H^{-2}(B_R))$, where $H^{-2}(B_R)$ denotes the
dual of $H^2_0(B_R)$. Since
\[
H^2(B_R)\Subset H^1(B_R)\hookrightarrow H^{-2}(B_R),
\]
the Aubin--Lions--Simon theorem yields, after extraction of a subsequence,
\[
u_n\to u
\qquad\text{strongly in }L^2(0,T;H^1(B_R)).
\]
By a diagonal argument over $R\to\infty$, we obtain a subsequence,
still denoted by $\{u_n\}$, such that
\begin{align}
    u_n &\to u
    &&\text{strongly in }L^2_{\mathrm{loc}}((0,T)\times\R^3),
    \label{eq:strong-loc}\\
    u_n &\rightharpoonup u
    &&\text{weakly in }L^2(0,T;H^2(\R^3)),
    \label{eq:weak-H2}\\
    u_n &\overset{*}{\rightharpoonup} u
    &&\text{weak-* in }L^\infty(0,T;L^2(\R^3)).
    \label{eq:weak-star}
\end{align}
Moreover, by \eqref{eq:L4bound} and weak lower semicontinuity,
\[
u\in L^4(0,T;L^4(\R^3)).
\]

\medskip

\noindent\textbf{Step 6. Convergence of the nonlinear term.}
We claim that
\[
J_n\mathcal{N}(u_n)\to \mathcal{N}(u)
\qquad\text{in }\mathcal{D}'((0,T)\times\R^3).
\]
First, by \eqref{eq:N-decomp},
\[
\mathcal{N}(u_n)
= \Div(u_n\otimes u_n)
  - \frac12\,u_n\,\Div u_n.
\]

For the divergence-form term, for every
$\phi\in C_c^\infty((0,T)\times\R^3;\R^3)$,
\[
\langle \Div(u_n\otimes u_n)-\Div(u\otimes u),\phi\rangle
= -\int_0^T\!\!\int_{\R^3}
    (u_n\otimes u_n-u\otimes u):\nabla\phi\,dx\,dt.
\]
Since $u_n\to u$ strongly in $L^2_{\mathrm{loc}}$ by \eqref{eq:strong-loc},
it follows that
\[
u_n\otimes u_n \to u\otimes u
\qquad\text{strongly in }L^1_{\mathrm{loc}}((0,T)\times\R^3),
\]
hence
\[
\Div(u_n\otimes u_n)\to \Div(u\otimes u)
\qquad\text{in }\mathcal{D}'.
\]

For the remaining term, let $\phi\in C_c^\infty((0,T)\times\R^3;\R^3)$.
Then
\[
\int_0^T\!\!\int_{\R^3}
    (u_n\,\Div u_n-u\,\Div u)\cdot\phi\,dx\,dt
=
\int_0^T\!\!\int_{\R^3}
    (\Div u_n)\,(u_n\cdot\phi)\,dx\,dt
-
\int_0^T\!\!\int_{\R^3}
    (\Div u)\,(u\cdot\phi)\,dx\,dt.
\]
Now $u_n\cdot\phi\to u\cdot\phi$ strongly in $L^2((0,T)\times\R^3)$
by \eqref{eq:strong-loc}, since $\phi$ is bounded and compactly
supported, while
\[
\Div u_n \rightharpoonup \Div u
\qquad\text{weakly in }L^2((0,T)\times\R^3)
\]
by \eqref{eq:uniform-energy}. Hence
\[
u_n\,\Div u_n \to u\,\Div u
\qquad\text{in }\mathcal{D}'.
\]
Therefore
\[
\mathcal{N}(u_n)\to \mathcal{N}(u)
\qquad\text{in }\mathcal{D}'((0,T)\times\R^3).
\]
It remains to remove $J_n$. For any
$\phi\in C_c^\infty((0,T)\times\R^3;\R^3)$,
\[
\langle J_n\mathcal{N}(u_n)-\mathcal{N}(u),\phi\rangle
=
\langle \mathcal{N}(u_n),J_n\phi-\phi\rangle
+
\langle \mathcal{N}(u_n)-\mathcal{N}(u),\phi\rangle.
\]
The second term tends to $0$ by the distributional convergence just
proved. The first term tends to $0$ because $J_n\phi\to\phi$ in
$L^{4}(0,T;H^{2}(\R^3))$ (by dominated convergence in time) and
$\{\mathcal{N}(u_n)\}$ is bounded in
$L^{4/3}(0,T;H^{-2}(\R^3))$ by Step~4. Thus
\[
J_n\mathcal{N}(u_n)\to \mathcal{N}(u)
\qquad\text{in }\mathcal{D}'((0,T)\times\R^3).
\]

\medskip

\noindent\textbf{Step 7. Passage to the limit.}
Let $\phi\in C_c^\infty([0,T)\times\R^3;\R^3)$.
Multiplying \eqref{eq:approx} by $\phi$, integrating over
$(0,T)\times\R^3$, and integrating by parts in the linear terms,
we obtain
\begin{align*}
&-\int_0^T\!\!\int_{\R^3} u_n\cdot\partial_t\phi\,dx\,dt
+ \nu\int_0^T\!\!\int_{\R^3}\nabla u_n:\nabla\phi\,dx\,dt
+ \beta\int_0^T\!\!\int_{\R^3}\Delta u_n\cdot\Delta\phi\,dx\,dt \\
&\qquad
+ \frac1\varepsilon\int_0^T\!\!\int_{\R^3}
    (\Div u_n)(\Div\phi)\,dx\,dt
+ \int_0^T\!\!\int_{\R^3}
    J_n\mathcal{N}(u_n)\cdot\phi\,dx\,dt
= \int_{\R^3} J_nu_0(x)\cdot\phi(0,x)\,dx.
\end{align*}
Passing to the limit as $n\to\infty$, using
\eqref{eq:strong-loc}--\eqref{eq:weak-star}, the weak convergence of
$\Div u_n$, the convergence of the nonlinear term proved in
Step~6, and $J_nu_0\to u_0$ in $L^2$, we obtain exactly
\eqref{eq:weak-form}.

\medskip

\noindent\textbf{Step 8. Energy inequality.}
From \eqref{eq:strong-loc} and a further diagonal extraction,
$u_n(t)\to u(t)$ in $L^2_{\mathrm{loc}}(\R^3)$ for almost every
$t\in(0,T)$; combined with the uniform bound
\eqref{eq:uniform-energy}, this gives
$u_n(t)\rightharpoonup u(t)$ weakly in $L^2(\R^3)$ for almost every
$t$. Together with the weak convergence of $\nabla u_n$,
$\Delta u_n$, and $\Div u_n$ in $L^2((0,t)\times\R^3)$, the uniform
estimate \eqref{eq:uniform-energy} and weak lower semicontinuity of
the norms yield
\[
\|u(t)\|_{2}^2
+ 2\nu\int_0^t\|\nabla u(\tau)\|_{2}^2\,d\tau
+ 2\beta\int_0^t\|\Delta u(\tau)\|_{2}^2\,d\tau
+ \frac{2}{\varepsilon}\int_0^t
    \|\Div u(\tau)\|_{2}^2\,d\tau
\le \|u_0\|_{2}^2
\]
for almost every $t\in[0,T]$. Since $T>0$ is arbitrary, this proves
\eqref{eq:energy-ineq} for almost every $t\ge0$.

Therefore $u$ satisfies all conditions of
Definition~\ref{def:weak-sol}, and the proof is complete.

\begin{rem}[Why one must not use the Leray projector here]\rm 
\label{rem:no-leray}
The use of the pure Fourier cut-off $J_n$ in \eqref{eq:approx},
rather than $J_n\mathbb{P}$ with $\mathbb{P}$ the Leray projector, is
essential. If one replaced $J_n$ by $J_n\mathbb{P}$, then every
approximate solution would be divergence-free, the penalty term
$-\frac1\varepsilon\nabla\Div u_n$ would vanish identically, and the
limit problem would become an incompressible projected system instead
of \eqref{Main-eq}.
\end{rem}

\section{Uniform small-data strong solutions:\\ proof of Theorem~\ref{thm:uniform-small-data-strong}}
\label{sec:strong-solutions}

In this section we prove that, for sufficiently small initial data in
$H^2(\R^3)$, the weak solution constructed above is in fact a
unique global strong solution, with estimates uniform in
$\varepsilon>0$. We rely on the semigroup analysis of
Section~\ref{subsec:semigroup}.

\begin{proof}[Proof of Theorem~\ref{thm:uniform-small-data-strong}]
We divide the proof into three steps.

\medskip
\noindent\textbf{Step 1. Local strong solutions.}
We first indicate the local construction. Using the semigroup
$S_\varepsilon(t)$ from \eqref{eq:semigroup-fourier}, the equation can
be written in mild form as
\begin{equation}
\label{eq:mild-strong}
u(t)
=
S_\varepsilon(t)u_0
-
\int_0^t S_\varepsilon(t-s)\mathcal{N}(u(s))\,ds .
\end{equation}
Since $H^2(\R^3)$ is an algebra and
$H^2(\R^3)\hookrightarrow L^\infty(\R^3)$, we have
\begin{equation}
\label{eq:N-H1-estimate}
\|\mathcal{N}(u)\|_{H^1}
\le
C\|u\|_{H^2}^2,
\end{equation}
and
\begin{equation}
\label{eq:N-H1-lipschitz}
\|\mathcal{N}(u)-\mathcal{N}(v)\|_{H^1}
\le
C\bigl(\|u\|_{H^2}+\|v\|_{H^2}\bigr)\|u-v\|_{H^2}.
\end{equation}
Together with the smoothing estimate \eqref{eq:semigroup-smoothing}
(with $m=1$, applied in the form
$\|S_\varepsilon(t)f\|_{H^2}\le C(1+t^{-1/2})\|f\|_{H^1}$, whose
singularity is integrable in time), these estimates imply that the map
defined by the right-hand side of \eqref{eq:mild-strong} is a
contraction on a closed ball of
\[
C([0,T];H^2(\R^3))
\]
for $T>0$ sufficiently small.
Therefore, for every $u_0\in H^2(\R^3)$, there exists a maximal
time $T_{\max}\in(0,\infty]$ and a unique local strong solution
\[
u\in C([0,T_{\max});H^2(\R^3))
\cap L^2_{\mathrm{loc}}([0,T_{\max});H^4(\R^3)),
\]
the $H^4$ regularity following from the parabolic smoothing of the
semigroup (see also \eqref{eq:H2-final-estimate-finite} below).
Moreover, if $T_{\max}<\infty$, then
\begin{equation}
\label{eq:continuation-criterion}
\limsup_{t\uparrow T_{\max}}\|u(t)\|_{H^2}=+\infty .
\end{equation}
Thus, to prove global existence, it is enough to obtain an a priori
bound for $\|u(t)\|_{H^2}$ on its interval of existence.

\medskip
\noindent\textbf{Step 2. Uniform $H^2$ energy estimate.}
We derive the estimate first for smooth solutions. The standard
Friedrichs approximation used in the weak-existence proof justifies the
calculation rigorously; the estimates below are uniform in the
approximation parameter.

Let $\alpha$ be a multi-index with $|\alpha|\le2$. Apply $D^\alpha$ to
\eqref{Main-eq}, take the $L^2$ inner product with $D^\alpha u$, and
integrate by parts. We obtain
\begin{align}
\frac12\frac{d}{dt}\|D^\alpha u\|_2^2
&+
\nu\|\nabla D^\alpha u\|_2^2
+
\beta\|\Delta D^\alpha u\|_2^2
+
\frac1\varepsilon\|\Div D^\alpha u\|_2^2
\nonumber\\
&=
-\int_{\R^3}D^\alpha \mathcal{N}(u)\cdot D^\alpha u\,dx .
\label{eq:H2-diff-alpha}
\end{align}
For $|\alpha|=0$, the right-hand side vanishes by the cancellation
identity \eqref{eq:cancel}.

We now estimate the differentiated nonlinear terms for
$1\le|\alpha|\le2$. We claim that
\begin{equation}
\label{eq:H2-nonlinear-key}
\sum_{1\le|\alpha|\le2}
\left|
\int_{\R^3}D^\alpha \mathcal{N}(u)\cdot D^\alpha u\,dx
\right|
\le
C\|u\|_{H^2}
\left(
\|\nabla u\|_{H^2}^2
+
\|\Delta u\|_{H^2}^2
\right).
\end{equation}
Indeed, since
\[
\mathcal{N}(u)=(u\cdot\nabla)u+\frac12u\,\Div u,
\]
it is enough to estimate products coming from $D^\alpha(u\nabla u)$.
For $|\alpha|=1$, the integrand is a linear combination of terms of
the form
\[
u\,\nabla^2u\,\nabla u,
\qquad
(\nabla u)(\nabla u)(\nabla u).
\]
Using $H^2(\R^3)\hookrightarrow L^\infty(\R^3)$ and the
Gagliardo--Nirenberg inequalities, we get
\[
\left|
\int_{\R^3}u\,\nabla^2u\,\nabla u\,dx
\right|
\le
\|u\|_\infty\|\nabla^2u\|_2\|\nabla u\|_2
\le
C\|u\|_{H^2}
\left(
\|\nabla u\|_2^2+\|\nabla^2u\|_2^2
\right),
\]
and, using
$\|\nabla u\|_3\le C\|\nabla u\|_2^{1/2}\|\nabla^2u\|_2^{1/2}$,
\[
\left|
\int_{\R^3}(\nabla u)^3\,dx
\right|
\le
\|\nabla u\|_3^3
\le
C\|\nabla u\|_2^{3/2}\|\nabla^2u\|_2^{3/2}
\le
C\|u\|_{H^2}
\left(
\|\nabla u\|_2^2+\|\nabla^2u\|_2^2
\right).
\]
For $|\alpha|=2$, the integrand is a linear combination of terms of
the form
\[
u\,\nabla^3u\,\nabla^2u,
\qquad
(\nabla u)(\nabla^2u)(\nabla^2u).
\]
The first type is bounded by
\[
\left|
\int_{\R^3}u\,\nabla^3u\,\nabla^2u\,dx
\right|
\le
\|u\|_\infty\|\nabla^3u\|_2\|\nabla^2u\|_2
\le
C\|u\|_{H^2}
\left(
\|\nabla^2u\|_2^2+\|\nabla^3u\|_2^2
\right).
\]
For the second type, using
\[
\|\nabla u\|_6\le C\|\nabla^2u\|_2,
\qquad
\|\nabla^2u\|_3
\le
C\|\nabla^2u\|_2^{1/2}\|\nabla^3u\|_2^{1/2},
\]
we obtain
\begin{align*}
\left|
\int_{\R^3}(\nabla u)(\nabla^2u)(\nabla^2u)\,dx
\right|
&\le
\|\nabla u\|_6\|\nabla^2u\|_3\|\nabla^2u\|_2  \\
&\le
C\|\nabla^2u\|_2^{5/2}\|\nabla^3u\|_2^{1/2} \\
&\le
C\|u\|_{H^2}
\left(
\|\nabla^2u\|_2^2+\|\nabla^3u\|_2^2
\right),
\end{align*}
where the last inequality follows from Young's inequality with
exponents $4/3$ and $4$. The terms arising from $\frac12u\,\Div u$ are
estimated in the same manner. Since, by Plancherel's theorem,
$\|\nabla^3u\|_2=\|\nabla\Delta u\|_2\le\|\Delta u\|_{H^2}$, this
proves \eqref{eq:H2-nonlinear-key}.
Summing \eqref{eq:H2-diff-alpha} over all $|\alpha|\le2$ (which, with
the norm convention fixed in Section~\ref{Intro}, reproduces exactly
the $H^2$ norms of $u$, $\nabla u$, $\Delta u$, and $\Div u$) and
using \eqref{eq:H2-nonlinear-key}, we obtain
\begin{align}
\frac12\frac{d}{dt}\|u\|_{H^2}^2
&+
\nu\|\nabla u\|_{H^2}^2
+
\beta\|\Delta u\|_{H^2}^2
+
\frac1\varepsilon\|\Div u\|_{H^2}^2
\nonumber\\
&\le
C\|u\|_{H^2}
\left(
\|\nabla u\|_{H^2}^2
+
\|\Delta u\|_{H^2}^2
\right).
\label{eq:H2-main-energy}
\end{align}

\medskip
\noindent\textbf{Step 3. Smallness, absorption, and global existence.}
Let
\[
m_0:=\frac12\min\{\nu,\beta\}.
\]
Choose $\delta_0>0$ so small that
\begin{equation}
\label{eq:delta-choice}
C\delta_0\le m_0.
\end{equation}
Assume that $\|u_0\|_{H^2}\le\delta_0$ and define
\[
T_*:=
\sup\left\{
T\in(0,T_{\max}):
\|u(t)\|_{H^2}\le \delta_0
\ \text{for all }t\in[0,T]
\right\}.
\]
By continuity, $T_*>0$. On $[0,T_*)$, estimate
\eqref{eq:H2-main-energy} and the choice \eqref{eq:delta-choice} imply
\[
\frac12\frac{d}{dt}\|u(t)\|_{H^2}^2
+
(\nu-C\delta_0)\|\nabla u(t)\|_{H^2}^2
+
(\beta-C\delta_0)\|\Delta u(t)\|_{H^2}^2
+
\frac1\varepsilon\|\Div u(t)\|_{H^2}^2
\le0.
\]
Hence there exists $c_0=c_0(\nu,\beta)>0$ such that
\begin{equation}
\label{eq:H2-absorbed}
\frac{d}{dt}\|u(t)\|_{H^2}^2
+
c_0
\left(
\|\nabla u(t)\|_{H^2}^2
+
\|\Delta u(t)\|_{H^2}^2
+
\frac1\varepsilon\|\Div u(t)\|_{H^2}^2
\right)
\le0.
\end{equation}
Integrating over $(0,t)$ gives
\begin{align}
\|u(t)\|_{H^2}^2
&+
c_0\int_0^t
\left(
\|\nabla u(\tau)\|_{H^2}^2
+
\|\Delta u(\tau)\|_{H^2}^2
+
\frac1\varepsilon\|\Div u(\tau)\|_{H^2}^2
\right)\,d\tau
\nonumber\\
&\le
\|u_0\|_{H^2}^2.
\label{eq:H2-final-estimate-finite}
\end{align}
In particular,
\[
\|u(t)\|_{H^2}\le \|u_0\|_{H^2}\le\delta_0
\qquad
\text{for all }t\in[0,T_*).
\]
Thus the bootstrap bound cannot break down, and therefore
$T_*=T_{\max}$. Moreover, \eqref{eq:H2-final-estimate-finite} prevents
the blow-up alternative \eqref{eq:continuation-criterion}.
Consequently, $T_{\max}=\infty$, and the solution is global.
Letting $t\to\infty$ in \eqref{eq:H2-final-estimate-finite} yields
\eqref{eq:uniform-H2-bound}. Since
\[
\int_0^T\|\Delta u(t)\|_{H^2}^2\,dt<\infty
\qquad
\text{for every }T>0,
\]
we have
\[
u\in L^2_{\mathrm{loc}}([0,\infty);H^4(\R^3)).
\]

It remains to prove uniqueness. Let $u$ and $v$ be two strong solutions
with the same initial data $u_0$ satisfying $\|u_0\|_{H^2}\le\delta_0$.
The a priori estimate of Steps~2--3 applies to each of them, so that
\[
\sup_{t\ge0}\|u(t)\|_{H^2}\le\delta_0,
\qquad
\sup_{t\ge0}\|v(t)\|_{H^2}\le\delta_0.
\]
Put $w=u-v$. Then
\[
\partial_t w-\nu\Delta w+\beta\Delta^2w
-\frac1\varepsilon\nabla\Div w
+
\bigl(\mathcal{N}(u)-\mathcal{N}(v)\bigr)=0.
\]
Taking the $L^2$ inner product with $w$, integrating by parts, and using
the bilinear structure of $\mathcal{N}$ together with the embedding
$H^2(\R^3)\hookrightarrow W^{1,3}(\R^3)\cap L^\infty(\R^3)$, we obtain
\[
\frac12\frac{d}{dt}\|w\|_2^2
+
\nu\|\nabla w\|_2^2
+
\beta\|\Delta w\|_2^2
+
\frac1\varepsilon\|\Div w\|_2^2
\le
C\bigl(\|u\|_{H^2}+\|v\|_{H^2}\bigr)
\bigl(\|w\|_2^2+\|\nabla w\|_2^2\bigr).
\]
Since $\|u\|_{H^2}$ and $\|v\|_{H^2}$ are bounded by $\delta_0$, choosing
$\delta_0$ smaller if necessary allows the $\|\nabla w\|_2^2$ term on the
right-hand side to be absorbed into the viscous dissipation. Hence
\[
\frac{d}{dt}\|w(t)\|_2^2
\le
C\delta_0\|w(t)\|_2^2.
\]
Because $w(0)=0$, Gronwall's inequality gives $w\equiv0$. This proves
uniqueness and completes the proof.
\end{proof}

\section{Optimal decay estimates: proof of Theorem~\ref{thm:optimal-decay}}
\label{sec:optimal-decay}

In this section we prove the optimal large-time decay rates for the
small-data strong solution obtained in
Theorem~\ref{thm:uniform-small-data-strong}. Throughout this section,
$u$ denotes the unique global strong solution of \eqref{Main-eq}.
All constants below are independent of the penalization parameter
$\varepsilon>0$.

We divide the proof into four steps.

\medskip
\noindent\textbf{Step 1. The basic energy inequality and Fourier splitting.}
By the cancellation identity \eqref{eq:cancel},
the strong solution satisfies
\begin{equation}
\label{eq:energy-decay-section}
\frac12\frac{d}{dt}\|u(t)\|_2^2
+
\nu\|\nabla u(t)\|_2^2
+
\beta\|\Delta u(t)\|_2^2
+
\frac1\varepsilon\|\Div u(t)\|_2^2
=0.
\end{equation}
In particular,
\begin{equation}
\label{eq:basic-integral-bounds}
\sup_{t\ge0}\|u(t)\|_2^2
+
2\nu\int_0^\infty\|\nabla u(t)\|_2^2\,dt
+
2\beta\int_0^\infty\|\Delta u(t)\|_2^2\,dt
+
\frac2\varepsilon\int_0^\infty
\|\Div u(t)\|_2^2\,dt
\le
\|u_0\|_2^2.
\end{equation}
Thus
\begin{equation}
\label{eq:div-uniform-bound}
\int_0^\infty\|\Div u(t)\|_2^2\,dt
\le
\frac{\varepsilon}{2}\|u_0\|_2^2
\le
\frac12\|u_0\|_2^2,
\qquad 0<\varepsilon.
\end{equation}
Let
\[
E_0(t):=\|u(t)\|_2^2.
\]
Dropping the nonnegative biharmonic and penalization contributions in
\eqref{eq:energy-decay-section} and applying Plancherel's theorem, we
find
\begin{equation}
\label{eq:E0-fourier-energy}
\frac{d}{dt}E_0(t)
+
2\nu\int_{\R^3}|\xi|^2|\widehat u(t,\xi)|^2\,d\xi
\le0.
\end{equation}
Fix $\ell_0>0$ and define
\[
B_0(t):=
\{\xi\in\R^3:|\xi|\le r_0(t)\},
\qquad
r_0(t)^2:=\frac{\ell_0}{1+t}.
\]
Then
\[
\int_{\R^3}|\xi|^2|\widehat u|^2\,d\xi
\ge
r_0(t)^2
\int_{B_0(t)^c}|\widehat u|^2\,d\xi
=
r_0(t)^2
\left(
E_0(t)-\int_{B_0(t)}|\widehat u|^2\,d\xi
\right).
\]
Consequently,
\begin{equation}
\label{eq:E0-splitting}
\frac{d}{dt}E_0(t)
+
\frac{a_0}{1+t}E_0(t)
\le
\frac{a_0}{1+t}
\int_{B_0(t)}|\widehat u(t,\xi)|^2\,d\xi,
\qquad
a_0:=2\nu\ell_0.
\end{equation}
We fix $\ell_0:=1/\nu$, so that
\begin{equation}
\label{eq:a0-choice}
a_0=2.
\end{equation}
(Any fixed value $a_0\in(3/2,5/2)$ would do; the explicit choice
avoids borderline logarithmic corrections in the weighted integrations
below.)

\medskip
\noindent\textbf{Step 2. Low-frequency estimate and $L^2$ decay.}
We now estimate the low-frequency term in \eqref{eq:E0-splitting}.
By Duhamel's formula \eqref{eq:mild-strong} and
\eqref{eq:semigroup-fourier}, together with
\[
\|e^{-tM_\varepsilon(\xi)}\|_{\mathcal L(\R^3)}
\le1,
\]
we obtain
\begin{equation}
\label{eq:uhat-basic}
|\widehat u(t,\xi)|
\le
|\widehat u_0(\xi)|
+
\int_0^t|\widehat{\mathcal{N}(u)}(s,\xi)|\,ds.
\end{equation}
Since $u_0\in L^1(\R^3)$,
\[
|\widehat u_0(\xi)|\le C\|u_0\|_1.
\]
Moreover, by the decomposition \eqref{eq:N-decomp},
\begin{equation}
\label{eq:Nhat-low-frequency}
|\widehat{\mathcal{N}(u)}(s,\xi)|
\le
C|\xi|\,\|u(s)\otimes u(s)\|_1
+
C\|u(s)\,\Div u(s)\|_1,
\end{equation}
whence, by the Cauchy--Schwarz inequality,
\begin{equation}
\label{eq:Nhat-estimate}
|\widehat{\mathcal{N}(u)}(s,\xi)|
\le
C|\xi|\,\|u(s)\|_2^2
+
C\|u(s)\|_2\|\Div u(s)\|_2.
\end{equation}
We close the $L^2$ decay by a bootstrap argument. Let $M_0>0$ be a
constant to be fixed below, with $M_0\ge 4E_0(0)$, and define
\[
T^\sharp:=
\sup\Bigl\{T>0:\ E_0(t)\le M_0(1+t)^{-3/2}
\ \text{for all }t\in[0,T]\Bigr\},
\]
so that $T^\sharp>0$ by continuity, since $E_0(0)\le\tfrac14M_0<M_0$.
Let $T<T^\sharp$; by the definition of $T^\sharp$,
\begin{equation}
\label{eq:E0-bootstrap}
E_0(t)\le M_0(1+t)^{-3/2}
\qquad\text{on }[0,T].
\end{equation}
Then
\begin{equation}
\label{eq:E0-integrable}
\int_0^tE_0(s)\,ds
\le
M_0\int_0^\infty(1+s)^{-3/2}\,ds
\le
2M_0.
\end{equation}
Also, by the Cauchy--Schwarz inequality, \eqref{eq:E0-integrable}, and
\eqref{eq:div-uniform-bound},
\begin{align}
\int_0^t\|u(s)\|_2\|\Div u(s)\|_2\,ds
&\le
\left(\int_0^t\|u(s)\|_2^2\,ds\right)^{1/2}
\left(\int_0^t\|\Div u(s)\|_2^2\,ds\right)^{1/2}
\nonumber\\
&\le
C M_0^{1/2}\|u_0\|_2.
\label{eq:div-term-low-frequency}
\end{align}
For $\xi\in B_0(t)$, we have $|\xi|\le r_0(t)$; hence
\eqref{eq:uhat-basic}--\eqref{eq:div-term-low-frequency} imply
\[
|\widehat u(t,\xi)|
\le
C\|u_0\|_1
+
C r_0(t)M_0
+
CM_0^{1/2}\|u_0\|_2.
\]
Consequently, since $|B_0(t)|\le Cr_0(t)^3$,
\begin{align}
\int_{B_0(t)}|\widehat u(t,\xi)|^2\,d\xi
&\le
C r_0(t)^3
\left(
\|u_0\|_1^2
+
r_0(t)^2M_0^2
+
M_0\|u_0\|_2^2
\right)
\nonumber\\
&\le
C(1+t)^{-3/2}
\left(
\|u_0\|_1^2
+
M_0^2(1+t)^{-1}
+
M_0\|u_0\|_2^2
\right).
\label{eq:low-frequency-E0}
\end{align}
Substituting \eqref{eq:low-frequency-E0} into
\eqref{eq:E0-splitting}, we get
\begin{equation}
\label{eq:E0-weighted-ineq}
\frac{d}{dt}E_0(t)
+
\frac{a_0}{1+t}E_0(t)
\le
C(1+t)^{-5/2}
\left(
\|u_0\|_1^2
+
M_0\|u_0\|_2^2
\right)
+
CM_0^2(1+t)^{-7/2}.
\end{equation}
Multiplying \eqref{eq:E0-weighted-ineq} by $(1+t)^{a_0}=(1+t)^2$
and integrating over $(0,t)$, we obtain
\[
(1+t)^2E_0(t)
\le
E_0(0)
+
C(1+t)^{1/2}
\left(
\|u_0\|_1^2
+
M_0\|u_0\|_2^2
\right)
+
CM_0^2 ,
\]
that is,
\[
E_0(t)
\le
C\left(
E_0(0)+\|u_0\|_1^2
+
M_0\|u_0\|_2^2
+
M_0^2
\right)(1+t)^{-3/2}.
\]
Set
\[
A_0:=E_0(0)+\|u_0\|_1^2,
\]
so that $A_0\le C\delta_*^2$, and choose
\[
M_0:=4CA_0 ,
\]
which in particular guarantees $M_0\ge 4E_0(0)$ since $C\ge1$.
Shrinking $\delta_*>0$ if necessary (so that
$C\|u_0\|_2^2\le\tfrac18$ and $CM_0\le\tfrac18$), we have
\[
C\bigl(M_0\|u_0\|_2^2+M_0^2\bigr)\le \frac14M_0.
\]
Thus the bootstrap estimate improves to
\[
E_0(t)
\le
\Bigl(CA_0+\frac14M_0\Bigr)(1+t)^{-3/2}
=
\frac12M_0(1+t)^{-3/2}
\qquad \text{on }[0,T],
\]
since $CA_0=\frac14M_0$ by the choice of $M_0$.
By continuity of $E_0$, this strict improvement forces
$T^\sharp=+\infty$, and therefore
\begin{equation}
\label{eq:E0-decay}
\|u(t)\|_2^2=E_0(t)\le C(1+t)^{-3/2},
\qquad t\ge0.
\end{equation}
This proves \eqref{eq:L2-decay-final}. In what follows we keep the
notation $E_0(t)\le M_0(1+t)^{-3/2}$ for the bound just obtained.

\medskip
\noindent\textbf{Step 3. First derivative decay.}
Set
\[
E_1(t):=\|\nabla u(t)\|_2^2,
\qquad
D_1(t):=\|\nabla^2u(t)\|_2^2.
\]
Applying $\nabla$ to \eqref{Main-eq}, taking the $L^2$ inner product
with $\nabla u$, and dropping the nonnegative biharmonic and penalization
contributions, we obtain
\begin{equation}
\label{eq:E1-energy}
\frac12\frac{d}{dt}E_1(t)+\nu D_1(t)
\le
\left|
\int_{\R^3}\nabla \mathcal{N}(u)\cdot\nabla u\,dx
\right|.
\end{equation}
The terms in $\nabla \mathcal{N}(u)$ are bounded by products of the form
\[
u\,\nabla^2u,
\qquad
(\nabla u)(\nabla u).
\]
Using H\"older's inequality and the Gagliardo--Nirenberg inequalities
$\|u\|_6\le C\|\nabla u\|_2$ and
$\|\nabla u\|_3\le C\|\nabla u\|_2^{1/2}\|\nabla^2u\|_2^{1/2}$,
\begin{align}
\left|
\int_{\R^3}\nabla \mathcal{N}(u)\cdot\nabla u\,dx
\right|
&\le
C\|u\|_6\|\nabla^2u\|_2\|\nabla u\|_3
+
C\|\nabla u\|_3^3
\nonumber\\
&\le
C\|\nabla u\|_2\|\nabla^2u\|_2
\left(\|\nabla u\|_2^{1/2}\|\nabla^2u\|_2^{1/2}\right)
+
C\left(\|\nabla u\|_2^{1/2}\|\nabla^2u\|_2^{1/2}\right)^3
\nonumber\\
&\le
C E_1(t)^{3/4}D_1(t)^{3/4}.
\label{eq:E1-nonlinear}
\end{align}
By Young's inequality,
\[
C E_1^{3/4}D_1^{3/4}
\le
\frac{\nu}{2}D_1+CE_1^3.
\]
Hence
\begin{equation}
\label{eq:E1-diff-ineq}
\frac{d}{dt}E_1(t)+c_1D_1(t)\le CE_1(t)^3,
\end{equation}
where $c_1>0$ depends only on $\nu$.
We now apply Fourier splitting. Let
\[
B_1(t):=\{\xi\in\R^3:|\xi|\le r_1(t)\},
\qquad
r_1(t)^2:=\frac{\ell_1}{1+t}.
\]
Then
\begin{align}
D_1(t)
&=
\int_{\R^3}|\xi|^4|\widehat u(t,\xi)|^2\,d\xi
\nonumber\\
&\ge
r_1(t)^2E_1(t)-r_1(t)^4E_0(t).
\label{eq:D1-splitting}
\end{align}
Using \eqref{eq:E0-decay}, \eqref{eq:E1-diff-ineq}, and
\eqref{eq:D1-splitting}, we get
\begin{equation}
\label{eq:E1-splitting-ineq}
\frac{d}{dt}E_1(t)
+
\frac{a_1}{1+t}E_1(t)
\le
C\ell_1^2M_0(1+t)^{-7/2}
+
CE_1(t)^3,
\qquad
a_1:=c_1\ell_1.
\end{equation}
We fix $\ell_1:=3/c_1$, so that
\begin{equation}
\label{eq:a1-choice}
a_1=3.
\end{equation}
We again use a bootstrap argument. Let $M_1\ge 4E_1(0)$ be fixed below
and suppose that, on $[0,T]$,
\begin{equation}
\label{eq:E1-bootstrap}
E_1(t)\le M_1(1+t)^{-5/2}.
\end{equation}
Then
\[
E_1(t)^3\le M_1^3(1+t)^{-15/2}.
\]
Multiplying \eqref{eq:E1-splitting-ineq} by $(1+t)^{a_1}=(1+t)^3$ and
integrating over $(0,t)$, we obtain
\[
(1+t)^3E_1(t)
\le
E_1(0)
+
C\ell_1^2M_0(1+t)^{1/2}
+
CM_1^3,
\]
hence
\[
E_1(t)
\le
C(A_1+M_1^3)(1+t)^{-5/2},
\qquad
A_1:=E_1(0)+\ell_1^2M_0 .
\]
Note that $A_1\le C\delta_*^2$. Choose
\[
M_1:=4CA_1 .
\]
Since $M_1\le C\delta_*^2$, shrinking $\delta_*>0$ if necessary gives
\[
CM_1^3\le \frac14M_1 .
\]
Hence
\[
E_1(t)\le \frac12M_1(1+t)^{-5/2}
\qquad\text{on }[0,T].
\]
By the same continuity argument as in Step~2, the estimate holds
globally:
\begin{equation}
\label{eq:E1-decay}
\|\nabla u(t)\|_2^2=E_1(t)\le C(1+t)^{-5/2},
\qquad t\ge0.
\end{equation}

\medskip
\noindent\textbf{Step 4. Second derivative decay.}
Set
\[
E_2(t):=\|\nabla^2u(t)\|_2^2,
\qquad
D_2(t):=\|\nabla^3u(t)\|_2^2.
\]
Applying $\nabla^2$ to \eqref{Main-eq}, taking the $L^2$ inner product
with $\nabla^2u$, and dropping the nonnegative biharmonic and
penalization contributions, we obtain
\begin{equation}
\label{eq:E2-energy}
\frac12\frac{d}{dt}E_2(t)+\nu D_2(t)
\le
\left|
\int_{\R^3}\nabla^2\mathcal{N}(u)\cdot\nabla^2u\,dx
\right|.
\end{equation}
The terms in $\nabla^2\mathcal{N}(u)$ are bounded by products of the form
\[
u\,\nabla^3u,
\qquad
(\nabla u)(\nabla^2u).
\]
Therefore, using
$\|u\|_6\le C\|\nabla u\|_2$,
$\|\nabla u\|_6\le C\|\nabla^2 u\|_2$, and
$\|\nabla^2u\|_3\le C\|\nabla^2u\|_2^{1/2}\|\nabla^3u\|_2^{1/2}$,
\begin{align}
\left|
\int_{\R^3}\nabla^2\mathcal{N}(u)\cdot\nabla^2u\,dx
\right|
&\le
C\|u\|_6\|\nabla^3u\|_2\|\nabla^2u\|_3
+
C\|\nabla u\|_6\|\nabla^2u\|_3\|\nabla^2u\|_2
\nonumber\\
&\le
C\|\nabla u\|_2\|\nabla^3u\|_2
\left(\|\nabla^2u\|_2^{1/2}\|\nabla^3u\|_2^{1/2}\right)
\nonumber\\
&\quad
+
C\|\nabla^2u\|_2
\left(\|\nabla^2u\|_2^{1/2}\|\nabla^3u\|_2^{1/2}\right)
\|\nabla^2u\|_2
\nonumber\\
&=
C E_1(t)^{1/2}E_2(t)^{1/4}D_2(t)^{3/4}
+
C E_2(t)^{5/4}D_2(t)^{1/4}.
\label{eq:E2-nonlinear}
\end{align}
By Young's inequality,
\[
C E_1^{1/2}E_2^{1/4}D_2^{3/4}
\le
\frac{\nu}{4}D_2+CE_1^2E_2,
\qquad
C E_2^{5/4}D_2^{1/4}
\le
\frac{\nu}{4}D_2+CE_2^{5/3}.
\]
Thus
\begin{equation}
\label{eq:E2-diff-ineq}
\frac{d}{dt}E_2(t)+c_2D_2(t)
\le
CE_1(t)^2E_2(t)+CE_2(t)^{5/3},
\end{equation}
where $c_2>0$ depends only on $\nu$.
Let
\[
B_2(t):=\{\xi\in\R^3:|\xi|\le r_2(t)\},
\qquad
r_2(t)^2:=\frac{\ell_2}{1+t}.
\]
Then
\begin{align}
D_2(t)
&=
\int_{\R^3}|\xi|^6|\widehat u(t,\xi)|^2\,d\xi
\nonumber\\
&\ge
r_2(t)^2E_2(t)-r_2(t)^6E_0(t).
\label{eq:D2-splitting}
\end{align}
Using \eqref{eq:E0-decay}, \eqref{eq:E1-decay}, and
\eqref{eq:D2-splitting}, we obtain
\[
\frac{d}{dt}E_2(t)
+
\frac{a_2}{1+t}E_2(t)
\le
C\ell_2^3M_0(1+t)^{-9/2}
+
CM_1^2(1+t)^{-5}E_2(t)
+
CE_2(t)^{5/3},
\qquad
a_2:=c_2\ell_2 .
\]
We fix $\ell_2:=4/c_2$, so that
\begin{equation}
\label{eq:a2-choice}
a_2=4.
\end{equation}
Let $M_2\ge 4E_2(0)$ be fixed below and assume on $[0,T]$ that
\[
E_2(t)\le M_2(1+t)^{-7/2}.
\]
Then
\[
E_2(t)^{5/3}\le M_2^{5/3}(1+t)^{-35/6},
\qquad
CM_1^2(1+t)^{-5}E_2(t)
\le
CM_1^2M_2(1+t)^{-17/2}.
\]
Multiplying by $(1+t)^{a_2}=(1+t)^4$ and integrating over $(0,t)$
gives
\[
(1+t)^4E_2(t)
\le
E_2(0)
+
C\ell_2^3M_0(1+t)^{1/2}
+
CM_1^2M_2
+
CM_2^{5/3},
\]
hence
\[
E_2(t)
\le
C(A_2+M_1^2M_2+M_2^{5/3})(1+t)^{-7/2},
\qquad
A_2:=E_2(0)+\ell_2^3M_0 .
\]
Choose
\[
M_2:=4CA_2 .
\]
Since $M_1\le C\delta_*^2$ and $M_2\le C\delta_*^2$, we may shrink
$\delta_*>0$ so that
\[
CM_1^2\le \frac18,
\qquad
CM_2^{2/3}\le \frac18 .
\]
Therefore,
\[
CM_1^2M_2\le \frac18M_2,
\qquad
CM_2^{5/3}\le \frac18M_2,
\]
while $CA_2=\frac14M_2$ by the choice of $M_2$. Hence
\[
E_2(t)\le \frac12M_2(1+t)^{-7/2}
\qquad\text{on }[0,T],
\]
and, by continuity,
\begin{equation}
\label{eq:E2-decay}
\|\nabla^2u(t)\|_2^2=E_2(t)\le C(1+t)^{-7/2},
\qquad t\ge0.
\end{equation}
Combining \eqref{eq:E0-decay}, \eqref{eq:E1-decay}, and
\eqref{eq:E2-decay}, we conclude that, for $k=0,1,2$,
\[
\|\nabla^ku(t)\|_2
\le
C(1+t)^{-3/4-k/2}.
\]
This finishes the proof of Theorem~\ref{thm:optimal-decay}.

\begin{rem}[Uniformity in the penalization parameter]\rm 
\label{rem:decay-uniform-epsilon}
The decay estimates above are uniform for $\varepsilon>0$.
The reason is structural: the term
\[
-\frac1\varepsilon\nabla\Div u
\]
contributes the favorable dissipation
\[
\frac1\varepsilon\|\Div u\|_2^2
\]
at the $L^2$ level, and similarly at higher derivative levels. In the
proof, this term is either kept as a nonnegative dissipation term or
discarded, and the only place where the penalization is used
quantitatively is the bound \eqref{eq:div-uniform-bound}, where the positive 
factor $\varepsilon$ acts in our favor. It is never estimated from
above by a quantity involving $1/\varepsilon$.
\end{rem}


\end{document}